\theoremstyle{definition}
\newtheorem{thm}{Theorem}[section]
\newtheorem{lemma}{Lemma}[section]
\newtheorem{cor}{Corollary}[section]
\newtheorem{remark}{Remark}[section]
\newtheorem{example}{Example}[section]
\newtheorem{as}{A\!}
\newtheorem{asb}{B\!}
\numberwithin{equation}{section}
\def\dis{\displaystyle}
\def\R{\mathbb{R}}
\def\N{\mathbb{N}}
\def\e{{\epsilon}}
\def\D{\Delta}
\def\d{\delta}
\def\l{\left}
\def\r{\right}
\def\a{\alpha}
\def\b{\beta}
\def\G{\Gamma}
\def\g{\gamma}
\def\th{\vartheta}
\def\s{\sigma}
\def\z{\zeta}
\def\la{\lambda}
\def\vp{\varphi}
\def\n{\nabla}
\def\p{\partial}
\def\F{\mathcal{F}}
\def\P{{\mathbb{P}}}
\def\toP{\stackrel{p}{\longrightarrow}}
\def\toD{\stackrel{d}{\longrightarrow}}
\def\E{\mathbb{E}}
\def\mb#1{\mbox{\boldmath $#1$}}
\def\I{\mathbf{1}}
\def\wh#1{\widehat{#1}} 
\def\wt#1{\widetilde{#1}}
\def\df{\mathrm{d}}
\title{Asymptotic distributions for estimated expected functionals of general random elements}
\author{Yasutaka Shimizu\footnote{E-mail: {\tt shimizu@waseda.jp}} \\ 
{\it Department of Applied Mathematics, Waseda University; }\\
JST CREST.}
\date{August 8, 2020}
\begin{document}

\maketitle 

\begin{abstract} 

We consider an estimation problem of expected functionals of a general random element that values in a metric space. If the functional is written by an explicit function of some unknown parameters, we can estimate it by plugging-in a suitable estimator into the function, and we can find the asymptotic distribution by a well-known delta method. However, if the functional is implicit in the parameters, it causes a problem of specifying asymptotic distribution.  
This paper gives a general condition to specify the asymptotic distribution even if the functional is implicit in the parameters, and further investigates it in detail when the random elements are semimartingales with jumps. 

\begin{flushleft}
{\it Key words:} Expected functional, asymptotic distribution, derivative process, semimartingale. 
\vspace{1mm}\\
{\it MSC2020:} {\bf 62E20}; 62M20. 
\end{flushleft}
\end{abstract}

\section{Introduction}

Let $(\Omega,\F,\P)$ be a probability space, and ${\cal X}$ be a metric space with norm $\|\cdot \|$.  
Consider a ${\cal X}$-valued random element $X^{\th}$ with an unknown parameter $\th\in \Theta\subset \R^p$, and the distribution of $X^\th$ is $P_{\th}:=\P\circ (X^\th)^{-1}$. 
Suppose that there exists the true value $\th_0\in \Theta$, and we are interested in inference for the following expected functional of $X^\th$:  
\begin{align*}
H(\th_0) = \E\l[h(X^{\th_0},\th_0)\r] = \int_{{\cal X}} h(x,\th_0)\,P_{\th_0}(\df x), 
\end{align*}
where $h: {\cal X}\times \Theta \to \R$. 

Such a expected functional appears in many statistical problem, 
where ${\cal X}$ is not only an Euclidean space, but also a functional space. 
When $H(\th)$ is written in explicit function of $\th$, we can estimate it as a {\it plug-in estimator} $H(\wh{\th})$ with a suitable estimator of $\th_0$ plugged-in, and it will be easy to evaluate the statistical error by, e.g., the {\it delta method}: for example, if $H$ is differentiable and $\wh{\th}$ is asymptotically normal with asymptotic variance $\s^2_0$, then $H(\wh{\th})$ is also asymptotically normal with asymptotic  variance $|\n_\th H(\th_0)|^2\s^2_0$; see, e.g., Corollary \ref{cor:mc-est}, below. 
However, it is not practicable when $H(\th)$ is implicit in $\th$, which is our interest in this paper. 

For example, consider a case where $X^\th=(X^\th_t)_{t\in [0,T]}$ is a diffusion process, which is a $C([0,T])$-valued random element. Estimating problem for 
\begin{align}
H(\th) = \E\l[e^{-r T}\max (X^{\th}_T  - K,0)\r],\quad r,\, K>0  \label{call-option}
\end{align}
will appear in a typical financial problem for the pricing an {\it European call option} with strike price $K$ and interest rate $r$ when $X^\th$ is a stock price. 
This functional $H$ is generally implicit in $\th$ except for some special models of $X^\th$. In such a case, Monte Carlo simulation will be used: based on an estimated value of $\th_0$, say $\wh{\th}$, generate many paths from the estimated distribution $P_{\wh{\th}}$ by simulations, say $X^{\wh{\th}}(i)\ (i=1,2,\dots, B)$, and compute the average $B^{-1}\sum_{i=1}^B e^{-r T}\max (X^{\wh{\th}}_T(i)  - K,0)$, which goes to, not $H(\th_0)$, but $H(\wh{\th})$ as $B\to \infty$, that remains a statistical error. To get information of $H(\th_0)$, we need to know the asymptotic distribution of $H(\wh{\th})$, but such a statistical error seems often ignored in practice since the asymptotic variance of $H(\wh{\th})$ is not clear. 

In this paper, we investigate the specification of asymptotic distribution of $H(\wh{\th})$ even in the case where $H(\th)$ is possibly implicit in $\th$ (so the delta method is not available explicitly), when $X^\th$ is a random element that  values in a general metric space $({\cal X},\|\cdot\|)$. More precisely, we will find the asymptotic distribution of 
\begin{align}
\g_n^{-1}(H(\wh{\th}) - H(\th_0)) \label{eq:target}
\end{align}
when $\g_n^{-1}(\wh{\th} - \th_0) \toD Z$ as $n\to \infty$ for some random variable $Z$ and norming sequence $\g_n\to 0\ (n\to \infty)$. 
This problem is a very fundamental problem in statistics, and 
it is well known, in the case where $H$ is explicit and differentiable, 
that the asymptotic distribution is found by the {\it delta method}: 
\[
\g_n^{-1}(H(\wh{\th}) - H(\th_0)) \toD \n_\th H(\th_0) Z. 
\]
However, it seems not discussed sufficiently in a statistical context when $H(\th)$ is implicit, and especially when $X^\th$ is a stochastic process. 
Formally speaking, we need the following derivative: 
\[
\n_\th H(\th) = \n_\th \E[h(X^\th,\th)] = \E\l[\n_xh(X^\th,\th)\n_\th X^\th + \dot{h}(X^\th,\th)\r], 
\]
where $\dot{h}(x,\th) = \n_\th h(x,\th)$, if $\n_\th$ and $\E$ are exchangable. 
However, the last expectation has a bit problem sisnce the sense of the derivative ``$\n_\th X^\th$" is still not clear. 

Such a ``derivative" has been considered in sensitivity analysis for expected functionals. 
For example, when $X^\th$ is a Euclidian valued random variable with a parameter $\th$ in the distribution, Suri \cite{s83} discusses an expression of a derivative $\n_\th X^\th$ based on the distribution function. 
Moreover, when $X^\th$ is a stochastic process with the initial value $X^\th_0=\th$, 
the map $x\mapsto X^x_t(\omega)$ is called a {\it stochastic flow}, and the continuity and the differentiability with respect to $\th$ can be discussed; see., e.g., Protter \cite{p05}, Chapter V.7 in the case where $X^x$ satisfies a stochastic differential equation. 

A similar problem appears in financial computation of {\it Greeks}, which are obtained as derivatives of option prices with respect to some specific parameters since those prices are written by expected functionals of underlying stock prices as in, e.g., \eqref{call-option}. 
This problem is recently well studied via {\it Malliavin calculus}, which has a powerful tool such as ``integration-by-parts" formula to compute such derivatives; see, e.g., Davis and Johansson \cite{dj06}, Fourni\'e {\it et al.} \cite{fetal99, fetal01},  Gobet and Kohatsu-Higa \cite{gk03}, among others. see also Kohatsu-Higa and Montero \cite{km04} as a good guidance.  t
On the other hand, Chen and Glasserman \cite{cg07},  Glasserman and Liu \cite{gl10} take the {\it path-wise derivative approach} to compute $\E\l[\n_xh(X^\th,\th)\n_\th X^\th\r]$ by Monte Carlo simulations; see also Glasserman \cite{gl04}, Chapter 7. 
 
In this paper, we will take a different approach. We consider a differentiability of $X^\th$ with respect to $\th$ in the $L^q$-sense for $q>0$ to evaluate errors in higher order terms; see the condition A4($q$), below. Under this approach,  the regularity conditions for \eqref{eq:target} can be an expectation-based and easy to check, 

First, we will discuss general conditions to yield asymptotic distributions in both cases where ${\cal X}=\R^d$ and ${\cal X}$ is a functional space, and then we see the each case in detail. 
The former is a standard situation, where the delta method is obtained as a special case; see Corollary \ref{cor:mc-est},  but the latter is performed with a kind of {\it derivative} of $X^\th$ with respect to $\th$; see Theorem \ref{thm:cond}. The case where $X^\th$ are semimartingale that values in $\mathbb{D}$-space is important in applications. In this case, the asymptotic distribution can be described in terms of the {\it derivative process} of $X^\th$ with respect to $\th$ in $L^q$-sense. 
The derivative process is essentially the same as the path-wise derivative discussed in  Chen and Glasserman \cite{cg07}, but we will give a different approach and an evaluation for not only a continuous diffusion processes, but also semimartingales with jumps, which is a new contribution on the derivative process because Glasserman and Liu \cite{gl10} just discuss from the simulations point of view.  
Our investigation on jump processes indicates that an “error” in \eqref{eq:target} may get worse when $X^\th$ values in $\mathbb{D}$-space than the case where $X^\th$ values in $\mathbb{C}$-space; see Remark \ref{rem:unknown2}.

The paper is organized as follows. In Section \ref{sec:mc}, we shall state fundamental conditions to get the asymptotic distribution in a general formulation, and a special case where ${\cal X} = \R^d$ is described there as a corollary of the general statement as well as the usual delta method. 
In later sections, we will consider more specific cases. In Section \ref{sec:C-space}, we consider the case where $X^\th$ is functional valued, and a sufficient condition to ensure that the asymptotic normality of $H(\wh{\th})$ is given in terms of the norm of the functional space ${\cal X}$. 
We shall check the condition in each specific form of the functional. 
Section \ref{sec:sde} is devoted to the case where $X^\th$ is described by stochastic differential equations with jumps. The situation differs to a large extent when $X^\th$ does not have a jump in the path (${\cal X}$ is a $\mathbb{C}$-space), compared to when $X^\th$ does (${\cal X}$ is a $\mathbb{D}$-space).  
The result indicates that we should be careful to use $H(\wh{\th})$ since it may not be asymptotically normal in the case where $X^\th$ is a jump process.

Throughout the paper, we use the following notation. 
\begin{itemize} 
\item $A\lesssim B$ means that there exists a universal constant $c>0$ such that $A \le c\cdot B$. 
\item A $d$-dim Gaussian variable (distribution) with mean $0$ and variance-covariance matrix $\Sigma$ is denoted by $N_d(0,\Sigma)$. We omit the index $d=1$. 
\item For a function $f: \R^d\to \R$ and $x=(x_1,\dots,x_d)\in \R^d$, 
\[
\n_x f = \l(\frac{\p f}{\p x_1},\dots, \frac{\p f}{\p x_d} \r)^\top, 
\]
and $\n_x^k = \n_x\otimes \n_x^{k-1},\ (k=2,3,\dots)$, constitutes a multilinear form. 

\item For a function $f:\R^d\times \Theta \to \R$ and an integer $k$, 
\begin{align*}
\dot{f}(x,\th) = \n_\th f(x,\th); \quad f^{(k)}(x,\th) = \n_x^k f(x,\th).  
\end{align*}
Note that $\n_x^k f$ is a $k$-th order tensor. 

\item For a $k$-th order tensor $x=(x_{i_1,i_2,\dots,i_k})_{i_1,\dots,i_k = 1,\dot d} \in \R^d\otimes \dots \otimes \R^d$, 
\[
|x| = \sqrt{\sum_{i_1=1}^d \dots\sum_{i_k=1}^d x^2_{i_1,i_2,\dots,i_k}}. 
\] 
\item For a ${\cal X}$-valued random element $X$, $\|X\|_{L^p}=\l(\E\|X\|^p\r)^{1/p}$ for $p>0$, where 
$\|\cdot \|$ a norm on ${\cal X}$, and write $X\in L^p$ if $\|X\|_{L^p} < \infty$. 
\end{itemize}

\section{Fundamental conditions for asymptotic distributions}\label{sec:mc}

\subsection{Basic results in general formulation}
Assume that a realization of $X^{\th_0}$ from $P_{\th_0}$, say $X^{\th_0,n}$,  is given, where $n$ is supposed to be a parameter on which the sample size depends. For example, when we observe $n$-samples of i.i.d. variables $\{X_k\}_{k\in \N}$, it can be regarded as $X^{\th_0, n} =(X_1,X_2,\dots,X_n)$, so $n$ represents the number of samples. When $X^{\th_0}$ is a stochastic process $X=(X_t)_{t\ge 0}$, $X^{\th_0,n}$ can be a time-continuous observation in a $[0,n]$-time interval: $X^{\th_0,n}=(X_t)_{t\in [0,n]}$, or it can be discrete samples such as $X^{\th_0,n}=(X_0,X_{t_1},\dots,X_{t_n})$, among others. 
We assume that a ``good" estimator of $\th_0$ is given based on the observations $X^{\th_0,n}$, say 
\[
\wh{\th}_n:=\wh{\th}(X^{\th_0,n}). 
\]
We assume that some estimator of $\th_0$, say $\wh{\th}_n$, is given in a suitable manner. 
We shall investigate a fundamental condition under which $H(\wh{\th}_n)$ is the asymptotic distribution is specified. 

We make the following conditions. 
\begin{as} \label{as:h-deriv}
For any $\th'\in \Theta$,  $\dis \n_\th \E[h(X^{\th'},\th)] = \E[\dot{h}(X^{\th'},\th)]$. 
\end{as}

\begin{as}\label{as:h-cont}
The function $\th \mapsto \E[\dot{h}(X^\th,\th)]$ is continuous. 
\end{as}

\begin{as}\label{as:Z}
There exists a diagonal matrix  $\G_n=\mathrm{diag}(\g_n^{(1)},\dots,\g_n^{(p)})$ with $\g_n^{(k)} > 0$ and 
$\g_{n*}:=\max_{1\le k\le p} \g_n^{(k)} \downarrow 0\ (n\to \infty)$ such that the estimator $\wh{\th}_n$ satisfies 
\[
\G_n^{-1}(\wh{\th}_n - \th_0) \toD Z;\qquad \g_{n*}^{-1}(\wh{\th}_n - \th_0) \toD Z^*, 
\]
as $n\to \infty$, for $p$-dim random variables $Z$ and $Z^*$. 
\end{as}

\begin{flushleft}
{\bf A4($\mb{q}$).}\ \ There exists  a ${\cal X}^p$-valued random element $Y^\th$ such that 
$Y^\th \in L^q$ for $q>0$ and 
\begin{align*}
\|X^{\th + u} - X^\th - u^\top Y^\th\|_{L^q} =o(|u|),\quad |u|\to 0,  
\end{align*}
uniformly in $\th\in \Theta$. 
\end{flushleft}

\begin{remark}
As for condition A\ref{as:Z}, it usually holds that $\g_n^{(k)}=1/\sqrt{n}$ for all $k$ in i.i.d.-cases, but there are some examples where the rates of convergence are different among parameters, e.g., for a sequence such that $T_n/n\to 0$ as $n\to \infty$ and constants $\s_1^2,\s_2^2\ne 0$, 
\[
\mathrm{diag}(\sqrt{T_n},\sqrt{n})(\wh{\th}_n^{(1)} - \th_0^{(1)}, \wh{\th}_n^{(2)} - \th_0^{(2)})^\top \toD N_2\l(0,\mathrm{diag}(\s_1^2,\s_2^2)\r)=Z. 
\]
In such a case, $\g_{n*}=1/\sqrt{T_n}$ and we have 
\[
\sqrt{T_n}(\wh{\th}_n^{(1)} - \th_0^{(1)}, \wh{\th}_n^{(2)} - \th_0^{(2)})^\top \toD (N(0,\s_1^2),0)^\top = Z^*, 
\]
which is a degenerate random variable; see also Examples \ref{ex:levy} and \ref{ex:ou2}. 
\end{remark}

\begin{remark}
In condition A4($q$), the random element $Y^\th$ is interpreted as the first derivative of $X^\th$ with respect to $\th$ in the sense of $L^q$. 
\end{remark}

Although the following seems to be a simple result, we shall claim it here since it is a basis of the discussion below. 
\begin{thm}\label{thm:mc-est}
Suppose that A\ref{as:h-deriv} -- A\ref{as:Z} hold true, and that there exists a constant vector $C_{\th_0} \in \R^p$ such that 
for $\g_{n*}:=\max_{1\le k\le p}\g_n^{(k)}$, 
\begin{align}
\g_{n*}^{-1} \E\l[h(X^{\th},\th_0) - h(X^{\th_0},\th_0)\r]\Big|_{\th = \wh{\th}_n} = C_{\th_0}^\top \g_{n*}^{-1}(\wh{\th}_n - \th_0) + o_p(1),\label{term2}
\end{align}
as $n\to \infty$. Then, it holds that 
\begin{align*}
\g_{n*}^{-1} [H(\wh{\th}_n) - H(\th_0)] \toD (\E[\dot{h}(X^{\th_0},\th_0)] + C_{\th_0})^\top \,Z^*, \quad n\to \infty.  
\end{align*}
\end{thm}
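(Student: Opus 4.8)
The plan is to write $H(\wh{\th}_n) - H(\th_0)$ as a telescoping sum that separates the dependence of $h(X^\th,\th)$ on its two slots, and then to reduce each piece to a linear functional of the single converging vector $\g_{n*}^{-1}(\wh{\th}_n - \th_0)$, so that one application of Slutsky's theorem closes the argument. Writing $\psi(\th',\th):=\E[h(X^{\th'},\th)]$, I would split
\begin{align*}
H(\wh{\th}_n) - H(\th_0) = \underbrace{\big(\psi(\wh{\th}_n,\wh{\th}_n) - \psi(\wh{\th}_n,\th_0)\big)}_{=:I_n} + \underbrace{\big(\psi(\wh{\th}_n,\th_0) - \psi(\th_0,\th_0)\big)}_{=:II_n}.
\end{align*}
The second term $II_n$ is exactly the quantity on the left of \eqref{term2}, so by hypothesis $\g_{n*}^{-1}II_n = C_{\th_0}^\top \g_{n*}^{-1}(\wh{\th}_n - \th_0) + o_p(1)$.

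First I would record the consistency $\wh{\th}_n \toP \th_0$: since A\ref{as:Z} gives $\g_{n*}^{-1}(\wh{\th}_n-\th_0) \toD Z^*$ with $\g_{n*}\downarrow 0$, Slutsky yields $\wh{\th}_n - \th_0 \toP 0$. For $I_n$ I would freeze the first slot at the value $\wh{\th}_n$ (random, but fixed per realization) and apply the one-dimensional mean value theorem to $\th \mapsto \psi(\wh{\th}_n,\th)$ along the segment from $\th_0$ to $\wh{\th}_n$; here A\ref{as:h-deriv} is what licenses differentiating under the expectation, giving $\n_\th \psi(\th',\th) = \E[\dot h(X^{\th'},\th)]$. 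This produces
\begin{align*}
I_n = \E[\dot h(X^{\wh{\th}_n}, \bar\th_n)]^\top (\wh{\th}_n - \th_0),
\end{align*}
for some $\bar\th_n$ between $\th_0$ and $\wh{\th}_n$, so that $\bar\th_n \toP \th_0$ as well.

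The remaining step is to show the random coefficient converges, $\E[\dot h(X^{\wh{\th}_n}, \bar\th_n)] \toP \E[\dot h(X^{\th_0},\th_0)]$, which is where A\ref{as:h-cont} enters: since $(\wh{\th}_n,\bar\th_n)\toP(\th_0,\th_0)$, continuity of the gradient map near $\th_0$ gives the claim. Combining the two pieces I would write the whole expression as a single linear form,
\begin{align*}
\g_{n*}^{-1}[H(\wh{\th}_n) - H(\th_0)] = \big(\E[\dot h(X^{\wh{\th}_n},\bar\th_n)] + C_{\th_0}\big)^\top \g_{n*}^{-1}(\wh{\th}_n - \th_0) + o_p(1),
\end{align*}
whose coefficient vector converges in probability to $\E[\dot h(X^{\th_0},\th_0)] + C_{\th_0}$ while the vector factor converges in distribution to $Z^*$; Slutsky's theorem then yields the stated limit.

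The main obstacle I anticipate is precisely this coefficient-convergence step: A\ref{as:h-cont} is stated as continuity along the diagonal $\th\mapsto\E[\dot h(X^\th,\th)]$, whereas the mean value theorem delivers the off-diagonal intermediate point $(\wh{\th}_n,\bar\th_n)$. I would therefore need to argue — via joint continuity of $(\th',\th)\mapsto\E[\dot h(X^{\th'},\th)]$ near the diagonal, or by re-splitting $I_n$ so that one slot stays fixed at $\th_0$ — that evaluating the gradient at this intermediate point still converges to the diagonal value $\E[\dot h(X^{\th_0},\th_0)]$. I note that A4($q$) is not needed for this theorem; it serves only to verify the hypothesis \eqref{term2} and to identify $C_{\th_0}$ in the later, more concrete settings.
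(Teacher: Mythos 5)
Your proposal is essentially the paper's own proof: the same two-term decomposition of $H(\wh{\th}_n)-H(\th_0)$ with the first slot frozen at $\wh{\th}_n$, the mean value theorem applied to the second slot, hypothesis \eqref{term2} for the remaining piece, and continuity plus Slutsky/continuous mapping to conclude. The off-diagonal continuity issue you flag (the mean value theorem delivers the gradient at $(\wh{\th}_n,\bar\th_n)$ while A\ref{as:h-cont} only speaks of the diagonal map) is a genuine subtlety, but the paper's proof glosses over it in exactly the same way, so your treatment matches the published argument.
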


\begin{proof}
Let $X_*^{\th_0}\sim P_{\th_0}$, which is independent of the data $X^{\th_0,n}$. Then we have that  
\begin{align*}
H(&\wh{\th}_n) - H(\th_0) \notag \\
&= \E\l[h(X_*^{\wh{\th}_n},\wh{\th}_n) - h(X_*^{\th_0},\th_0) \big| X^{\th_0,n}\r]  \notag \\
&= \E\l[h(X_*^{\wh{\th}_n},\wh{\th}_n) - h(X_*^{\wh{\th}_n},\th_0) \big| X^{\th_0,n}\r] 
+ \E\l[h(X_*^{\wh{\th}_n},\th_0) - h(X_*^{\th_0},\th_0) \big| X^{\th_0,n}\r] \notag \\
&= \E\l[\dot{h}(X_*^{\th},\th_0 + \eta_n(\wh{\th} - \th_0)\r]\Big|_{\th = \wh{\th}_n} (\wh{\th}_n - \th_0) 
+ \E\l[h(X^{\th},\th_0) - h(X^{\th_0},\th_0)\r]\Big|_{\th = \wh{\th}_n},    
\end{align*}
where $\eta_n$ is a random variable values in $[0,1]$. 
We use the mean value theorem in the last equality.  
Then, under A\ref{as:h-cont}, the continuous mapping theorem yields the result. 
\end{proof}

This theorem immediately leads us a version of the {\it delta method} when ${\cal X} = \R^d$. 
\begin{cor}\label{cor:mc-est}
Consider the case where ${\cal X} = \R^d$ and $X^\th$ be a random variable with probability density $f:{\cal X}\times \Theta \to \R$ in Theorem \ref{thm:mc-est}. Suppose that $f$ is twice differentiable with respect to $\th\in \Theta$ with $\int_{\cal X} h(x,\th_0)\dot{f}(x,\th_0)\,\df x < \infty$. Moreover, suppose A\ref{as:Z} holds, and that it holds for the second derivative of $f$ in $\th$, say $\ddot{f}$, such that 
\begin{align}
\sup_{\th\in \Theta} \l|\int_{\cal X} h(x,\th_0)\ddot{f}(x,\th)\,\df x\r| < \infty. \label{f-dd}
\end{align}
Then $C_\th$ in \eqref{term2} is given by $C_\th=\int_\R h(x,\th)\dot{f}(x,\th)\,\df x$, and it follows that 
\[
\g_{n*}^{-1} [H(\wh{\th}_n) - H(\th_0)] \toD  \n_\th H(\th_0)^\top \,Z^*, \quad n\to \infty, 
\]
where 
\[
\n_\th H(\th_0) = \int_{\cal X} \l[\dot{h}(x,\th_0)f(x,\th_0)+ h(x,\th_0)\dot{f}(x,\th_0)\r]\,\df x. 
\]
\end{cor}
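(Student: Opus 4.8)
The plan is to verify the key hypothesis \eqref{term2} of Theorem \ref{thm:mc-est} by a second-order Taylor expansion of the deterministic map $\th\mapsto \E[h(X^\th,\th_0)]$, and then to read off the stated limit directly from that theorem. Write $G(\th):=\E[h(X^\th,\th_0)]=\int_{\cal X}h(x,\th_0)f(x,\th)\,\df x$, noting that the functional argument $\th_0$ inside $h$ is frozen, so that only the parameter of the density varies. The quantity on the left of \eqref{term2} is exactly $\g_{n*}^{-1}[G(\wh{\th}_n)-G(\th_0)]$, so it suffices to show that $G$ is $C^2$ near $\th_0$, that $\n_\th G(\th_0)$ equals the claimed $C_{\th_0}$, and that the quadratic remainder is of the right order.

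First I would differentiate under the integral sign. The assumption that $f$ is twice differentiable in $\th$ together with $\int_{\cal X}h(x,\th_0)\dot{f}(x,\th_0)\,\df x<\infty$ gives $\n_\th G(\th_0)=\int_{\cal X}h(x,\th_0)\dot{f}(x,\th_0)\,\df x$, which is precisely the asserted $C_{\th_0}$. Differentiating once more produces the Hessian $\ddot{G}(\th)=\int_{\cal X}h(x,\th_0)\ddot{f}(x,\th)\,\df x$, and the uniform bound \eqref{f-dd} guarantees $\sup_{\th\in\Theta}|\ddot{G}(\th)|<\infty$. A second-order Taylor expansion then yields
\[
G(\wh{\th}_n)-G(\th_0)=C_{\th_0}^\top(\wh{\th}_n-\th_0)+\tfrac12(\wh{\th}_n-\th_0)^\top\ddot{G}(\tilde\th_n)(\wh{\th}_n-\th_0)
\]
for some $\tilde\th_n$ on the segment between $\th_0$ and $\wh{\th}_n$.

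Next I would control the quadratic remainder. From A\ref{as:Z} each coordinate satisfies $\wh{\th}_n^{(k)}-\th_0^{(k)}=O_p(\g_n^{(k)})=O_p(\g_{n*})$, whence $|\wh{\th}_n-\th_0|=O_p(\g_{n*})$; combined with $\sup_\th|\ddot{G}(\th)|<\infty$ this makes the remainder $O_p(\g_{n*}^2)$, so that after multiplication by $\g_{n*}^{-1}$ it is $O_p(\g_{n*})=o_p(1)$. This establishes \eqref{term2} with $C_{\th_0}=\int_{\cal X}h(x,\th_0)\dot{f}(x,\th_0)\,\df x$, and Theorem \ref{thm:mc-est} immediately delivers $\g_{n*}^{-1}[H(\wh{\th}_n)-H(\th_0)]\toD(\E[\dot{h}(X^{\th_0},\th_0)]+C_{\th_0})^\top Z^*$. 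It remains only to recognise the limit: writing $\E[\dot{h}(X^{\th_0},\th_0)]=\int_{\cal X}\dot{h}(x,\th_0)f(x,\th_0)\,\df x$ and adding $C_{\th_0}$, the product rule for $H(\th)=\int_{\cal X}h(x,\th)f(x,\th)\,\df x$ identifies the sum with $\n_\th H(\th_0)$.

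The main obstacle is the interchange of differentiation and integration, which is exactly what the twice-differentiability of $f$ and the integrability/uniform-boundedness hypotheses \eqref{f-dd} are there to license; once those are secured the rest is a routine Taylor-plus-$O_p$ estimate. A minor point requiring care is that A\ref{as:Z} controls the coordinates of $\wh{\th}_n$ at possibly different rates $\g_n^{(k)}$, so one must pass through the componentwise bound $|\wh{\th}_n-\th_0|=O_p(\g_{n*})$ rather than assuming a single scalar rate for the whole vector.
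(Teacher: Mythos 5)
Your proof is correct and follows essentially the same route as the paper: a second-order Taylor expansion of $\th\mapsto\int_{\cal X}h(x,\th_0)f(x,\th)\,\df x$ at $\th_0$, identification of the first-order coefficient with $C_{\th_0}$, control of the remainder via the uniform bound \eqref{f-dd} together with $|\wh{\th}_n-\th_0|=O_p(\g_{n*})$ from A\ref{as:Z}, and then an appeal to Theorem \ref{thm:mc-est}. The only cosmetic difference is that the paper expands $f(x,\th+u)$ pointwise inside the integral while you expand the integrated map $G$ directly, which is if anything slightly cleaner.
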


\begin{proof}
For $\th \in \Theta$ and $u\in \R^p$ with $\th + u\in \Theta$, it follows from Taylor's formula that 
\begin{align*}
\E\l[h(X^{\th + u},\th) - h(X^\th,\th)\r] &= \int_{\cal X} h(x,\th)\l[f(x,\th + u) - f(x,\th)\r]\,\df x \\
&= \int_{\cal X} h(x,\th)\l[u^\top \dot{f}(x,\th) + u^\top \ddot{f}(x,\th_u) u\r]\,\df x
\end{align*}
where $\th^u:= \th + \eta_u u$ for some $\eta_u\in [0,1]$. 
Hence, when $\th=\th_0$ and $u=\wh{\th}_n - \th_0$ and both sides are multiplied by $\g_{n*}^{-1}$, we have that 
\begin{align*}
\g_{n*}^{-1}\E\l[h(X^{\wh{\th}_n},\th_0) - h(X^{\th_0},\th_0)\r] 
&= \l(\int_{\cal X} h(x,\th_0)\dot{f}(x,\th_0)\,\df x\r)^\top \cdot \g_{n*}^{-1}(\wh{\th}_n - \th_0) \\
&\quad + O_p(|\wh{\th}_n - \th_0|),\quad n\to \infty. 
\end{align*}
Therefore, $C_{\th_0} = \int_{\cal X} h(x,\th_0)\dot{f}(x,\th_0)\,\df x$, and Theorem \ref{thm:mc-est} and the condition A\ref{as:h-deriv} yield that 
\begin{align*}
\g_{n*}^{-1} [H(\wh{\th}_n) - H(\th_0)] &\toD  \l( \int_{\cal X} \l[\dot{h}(x,\th_0)f(x,\th_0)+ h(x,\th_0)\dot{f}(x,\th_0)\r]\,\df x\r)^\top Z^*\\
&= \n_\th H(\th_0)^\top \,Z^*,\quad n\to \infty. 
\end{align*}
\end{proof}

When ${\cal X}$ is not Euclidean, but some functional spaces, the following theorem will be useful to specify the value of $C_{\th_0}$ in Theorem \ref{thm:mc-est}. 

\begin{thm}\label{thm:cond}
Suppose that assumptions A\ref{as:Z} and A4($q$) hold for a constant $q>1$, and that there exists a $\R^p$-valued random variable $G_{\th_0} \in L^1$ such that for each $u\in \R$ with $\th_0 + u\in \Theta$, 
\begin{align}
\l|\E[h(X^{\th_0 + u},\th_0) - h(X^{\th_0},\th_0)  -  u^\top G_{\th_0}]\r| \lesssim \|X^{\th_0 + u} - X^{\th_0} - u^\top Y^{\th_0}\|_{L^q}  +  r_u,  \label{mvt}
\end{align}
where $r_u=o(|u|)$ as $|u|\to 0$. Then the equality \eqref{term2} holds true with $C_{\th_0}=\E[G_{\th_0}]$. 
\end{thm}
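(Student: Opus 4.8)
The plan is to read the hypothesis \eqref{mvt} as a genuine first-order Taylor expansion of the deterministic scalar map
\[
\phi(u):=\E\l[h(X^{\th_0+u},\th_0)-h(X^{\th_0},\th_0)\r],\qquad \th_0+u\in\Theta,
\]
and then to substitute the random increment $u=\wh{\th}_n-\th_0$, with A\ref{as:Z} supplying the stochastic bounds needed to kill the remainder. First I would combine \eqref{mvt} with condition A4($q$) evaluated at $\th=\th_0$: since that condition gives $\|X^{\th_0+u}-X^{\th_0}-u^\top Y^{\th_0}\|_{L^q}=o(|u|)$ and the hypothesis supplies $r_u=o(|u|)$, the right-hand side of \eqref{mvt} is itself $o(|u|)$ as $|u|\to0$. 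Because $u^\top G_{\th_0}$ is linear in $u$ and $G_{\th_0}\in L^1$, the expectation passes through to give $u^\top\E[G_{\th_0}]$, so \eqref{mvt} becomes
\[
\l|\phi(u)-u^\top\E[G_{\th_0}]\r|=o(|u|),\qquad |u|\to0.
\]
That is, $\phi$ is differentiable at the origin with gradient $\E[G_{\th_0}]$, which is precisely the candidate for $C_{\th_0}$.

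Next I would pass to the random argument. I would record the little-$o$ above through a modulus: there is a function $\ve(\cdot)$ with $\ve(t)\downarrow0$ as $t\downarrow0$ such that $|\phi(u)-u^\top\E[G_{\th_0}]|\le\ve(|u|)\,|u|$ for all sufficiently small $|u|$. Putting $u=\wh{\th}_n-\th_0$ and dividing by $\g_{n*}$ yields
\[
\g_{n*}^{-1}\l|\phi(\wh{\th}_n-\th_0)-(\wh{\th}_n-\th_0)^\top\E[G_{\th_0}]\r|\le \ve\l(|\wh{\th}_n-\th_0|\r)\cdot \g_{n*}^{-1}|\wh{\th}_n-\th_0|.
\]
Here A\ref{as:Z} controls both factors on the right: from $\g_{n*}^{-1}(\wh{\th}_n-\th_0)\toD Z^*$ we get $\g_{n*}^{-1}|\wh{\th}_n-\th_0|=O_p(1)$, while $\g_{n*}\downarrow0$ forces $\wh{\th}_n-\th_0\toP0$, so the continuous mapping theorem gives $\ve(|\wh{\th}_n-\th_0|)\toP0$. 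The right-hand side is therefore $o_p(1)\cdot O_p(1)=o_p(1)$. Since $\g_{n*}^{-1}\phi(\wh{\th}_n-\th_0)$ is exactly the left-hand side of \eqref{term2}, rearranging establishes \eqref{term2} with $C_{\th_0}=\E[G_{\th_0}]$.

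The only genuinely delicate point is the last step. The expansion coming from \eqref{mvt} is purely deterministic and pointwise in $u$, so it must be converted into a statement that survives division by the vanishing scale $\g_{n*}$ when evaluated at the \emph{random} point $\wh{\th}_n-\th_0$. The mechanism that makes this work is that $\g_{n*}^{-1}(\wh{\th}_n-\th_0)$ is stochastically bounded while the relative error modulus $\ve(|\wh{\th}_n-\th_0|)$ collapses to zero in probability; extracting the monotone modulus $\ve$ from the little-$o$ is the small technical bridge between the deterministic estimate and the probabilistic conclusion. No uniformity in $u$ beyond this is needed, so the remainder of the argument is routine.
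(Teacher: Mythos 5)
Your proof is correct and follows essentially the same route as the paper's: use A4($q$) to reduce \eqref{mvt} to the deterministic expansion $\E[h(X^{\th_0+u},\th_0)-h(X^{\th_0},\th_0)]=\E[G_{\th_0}]^\top u+o(|u|)$, then substitute $u=\wh{\th}_n-\th_0$ and invoke A\ref{as:Z} to kill the normalized remainder. Your explicit modulus-of-continuity bridge from the deterministic $o(|u|)$ to the stochastic $o_p(1)$ is a detail the paper leaves implicit, but it is the same argument.
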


\begin{proof}
The assumption A4($q$) with $q>1$ implies that  
\begin{align*}
\l|\E[h(X^{\th_0 + u},\th_0) - h(X^{\th_0},\th_0)  -  u^\top G_{\th_0}]\r| = o(|u|), \quad |u|\to 0. 
\end{align*}
Then, it follows that 
\[
\E[h(X^{\th_0 + u},\th_0) - h(X^{\th_0},\th_0) ] =  \E[G_{\th_0}]^\top u + o(|u|), \quad |u|\to 0. 
\]
When $u = \wh{\th}_n - \th_0$ and both sides are multiplied by $\g_{n*}^{-1}$, 
we obtain
\begin{align*}
\g_{n*}^{-1}\E\l[h(X^{\th},\th_0) - h(X^{\th},\th_0)\r] \Big|_{\th=\wh{\th}_n}  
&= \E[G_\th]^\top  \g_{n*}^{-1}(\wh{\th}_n - \th_0) + o_p(|\g_{n*}^{-1}(\wh{\th}_n - \th_0)|). 
\end{align*}
The last term converges to zero in probability under A\ref{as:Z}. This ends the proof. 
\end{proof}

\begin{example}
Consider a random variable $X^\th$ values on ${\cal X}=\R$ with distribution function $F_\th:\R \to [0,1]$ and a parameter $\th\in \Theta \subset \R$. Suppose that a positive density $\n_x F_\th(x) = f(x,\th)$ exists, and that $\n_\th^2 F_\th^{-1}$ is bounded for simplicity: 
\begin{align}
D:=\sup_{x\in [0,1],\th\in \Theta}|\n_\th^2 F_\th^{-1}(x)| < \infty. \label{bdd}
\end{align}
We shall consider a ``derivative $\n_\th X^\th$" after the idea by Suri \cite{s83}: we may set 
\[
X_\th = F^{-1}_\th(U);\quad \n_\th X^\th :=\n_\th F_\th^{-1}(U)
\]
where $U$ is a uniform random variable on $[0,1]$ independent of $\th$. 
This leads us to $F_\th(F^{-1}_\th(U)) = U$. Differentiating the both sides, we have that $\n_\th F(X^\th) + f(X^\th,\th) \n_\th X^\th = 0$, and that 
\[
\n_\th X^\th = -\frac{\n_\th F_\th(X^\th)}{f(X^\th, \th)}. 
\]
Under the assumption \eqref{bdd}, we can easily see by Taylor's formula that, for any $q>0$, 
\[
\|X^{\th + u} - X^\th - u^\top\n_\th X^\th\|^q \le \frac{D}{2}|u|^2 = o(u),\quad |u|\to 0, 
\]
which yields the condition A($q$) with $Y^\th = \n_\th X^\th$. 

Now we suppose that $\n_x h(X^{\th_0},\th_0) \in L^r$ for some $r>1$ with $1/q + 1/r =1$. Then we can see that 
\[
G_{\th_0} = \n_x h(X^{\th_0},\th_0)\n_\th X^{\th_0}. 
\]
Indeed, we see by Taylor's formula that 
\begin{align*}
\l| \E \l[h(X^{\th_0 + u},\th_0) - h(X^{\th_0},\th_0) - u G_{\th_0} \r] \r|
&= \l| \E\l[\n_x h(X^{\th_0},\th_0) (X^{\th_0+u} - X^{\th_0}) - uG_{\th_0}\r] \r| \\
&\le \E\l[|\n_x h(X^{\th_0},\th_0)| \cdot |X^{\th_0+u} - X^{\th_0} - Y^{\th_0}|\r] \\
&\le \|\n_x h(X^{\th_0},\th_0)\|_{L^r} \|X^{\th_0+u} - X^{\th_0} - Y^{\th_0}\|_{L^q}, 
\end{align*}
under the assumption A\ref{as:h-deriv}, which yields the inequality \eqref{mvt}. 
Therefore it follows by the integration-by-parts that 
\begin{align*}
C_{\th_0} &= \E[G_{\th_0}] = - \int_\R h(x,\th_0) \frac{\n_\th F_{\th_0}(x)}{f(x,\th_0)}\,f(x,\th_0)\,\df x 
 = \int_\R h(x,\th_0)\dot{f}(x,\th_0)\,\df x, 
\end{align*}
which coincides with the expression of $C_{\th_0}$ in Corollary \ref{cor:mc-est}. 
\end{example}

\if0
\begin{example}
Let $X^\th$ be a real valued random variable with $\Theta \subset \R$, and assume that we have a set of i.i.d. samples 
$X^{\th_0,n} = (X_1,\dots,X_n)$ with probability density $f(x,\th)$ with Fisher information 
$I_\th := - \int_\R \frac{\dot{f}^2(x,\th)}{f(x,\th)}\,\df x < \infty$ for any $\th\in \Theta$.  
Suppose the regularities are such that the maximum likelihood estimator $\wh{\th}_n$ is asymptotically normal: 
\[\UTF{00DF}%
\sqrt{n}(\wh{\th}_n - \th_0) \toD N(0,I_{\th_0}^{-1}),\quad n\to \infty. 
\]
Moreover, suppose that $h:\R\times \Theta \to \R$ is a function such that all of the conditions in Theorem \ref{thm:mc-est} are satisfied, and that $C_\th=\int_\R h(x,\th)\dot{f}(x,\th)\,\df x$ is continuous in $\th$. 
Then, as $n\to \infty$,
\[
\sqrt{n}\l(\wh{H}^*(\wh{\th}_n) - H(\th_0)\r) \toD N(0,C_{\th_0}^2 I_{\th_0}^{-1}). 
\]
Therefore, an $\a$-confidence interval for $H(\th_0)$ via an estimator $H(\wh{\th}_n)$ is given by 
\[
\l[H(\wh{\th}_n) - \frac{z_{\a/2}}{\sqrt{n}} C_{\wh{\th}_n} I_{\wh{\th}_n}^{-1/2},H(\wh{\th}_n) + \frac{z_{\a/2}}{\sqrt{n}} C_{\wh{\th}_n} I_{\wh{\th}_n}^{-1/2}\r], 
\]
where $z_\a$ is the upper $\a$ percentile of $N(0,1)$. 
\end{example}

\begin{example}
Let us consider a more concrete example than the previous. 
Let $X^{\th} = X_{\a,\b}$ be a random variable that follows a gamma distribution with probability density
\[
\g(x;\a,\b) = \frac{\b^\a}{\G(\a)}x^{\a-1}e^{-\b x}\I_{\{x>0\}},\quad \th = (\a,\b)\in \Theta. 
\]
Assuming that $\a>1$, we shall estimate $H(\th) = \E[h(X)]$ for a smooth function $h\in C^\infty(\R)$ by Monte Carlo simulation based on $n$-i.i.d. samples: $X_1,\dots,X_n$. 
We suppose that for the $n$-th order Taylor expansion of $h$ around $x=0$, 
say $h_n(x) := \sum_{k=1}^n \frac{h^{(k)(0)}}{k!}x^k$, satisfies 
\[
\sup_{x\in \R} |h_n(x) - h(x)| \to 0\quad n\to \infty. 
\]
The maximum likelihood estimator $\wh{\th}_n$ is given by the solution to  
$\sum_{i=1}^n \n_\th \log \g(X_i;\wh{\th}_n) = 0$, and it satisfies
\[
\sqrt{n}(\wh{\th}_n - \th) \toD N_2(0,I^{-1}(\th)), \quad n\to \infty, 
\]
where $I(\th)$ is the Fisher information matrix, given by 
\[
I(\th) = \begin{pmatrix}\psi'(\a) & 1/\b \\ 1/\b & \a/\b^2 \end{pmatrix}, 
\]
and $\psi$ is the digamma function with derivative $\psi'$. 
By direct calculation, we have $\dot{\g}(x;\th) = (\n_\a \g,\n_\b \g)$: 
\begin{align*}
\n_\a\g(x;\th) &= \l(\frac{\log \b}{\G(\a)} - \psi(\a)\r)\g(x;\a,\b) + \b \g(x;\a-1,\b) \\
\n_\b\g(x;\th) &= \frac{1}{\b}\g(x;\a,\b) - \frac{\a}{\b}\g(x;\a+1,\b). 
\end{align*}
Therefore, it follows from the dominated convergence theorem that  
\begin{align*}
C_\th &= \int_0^\infty h(x)\dot{\g}(x;\th)\,\df x  = \sum_{k=1}^\infty \frac{h^{(k)}(0)}{k!} \int_0^\infty x^k \dot{\g}(x;\th)\,\df x\\ 
&= \sum_{k=1}^\infty \frac{h^{(k)}(0)}{k!} 
\begin{pmatrix}
 \dis \l(\frac{\log \b}{\G(\a)} - \psi(\a)\r)\E[X_{\a,\b}^k] + \b \E[X_{\a-1,\b}^k]) \\
 \dis \frac{1}{\b}\E[X_{\a,\b}^k]- \frac{\a}{\b}\E[X_{\a+1,\b}^k], 
\end{pmatrix}
\end{align*}
which is computable by the formula 
\[
\E[X_{\a,\b}^k] = \frac{\a(\a+1)\dots(\a + k-1) }{\b^k}. 
\]
If $\Theta$ is bounded, then all the conditions in Corollary \ref{cor:mc-est} are satisfied, and hence 
we have the asymptotic distribution of the estimator $H(\wh{\th}_n)$: 
\[
\sqrt{n}(H(\wh{\th}_n) - H(\th)) \toD N\l(0, C_\th^\top I^{-1}(\th)C_\th\r),\quad n\to \infty. 
\]
\end{example}
\fi

\section{Expected functionals for stochastic processes}\label{sec:C-space}
In this section, we consider the case where ${\cal X}$ is a  functional space on a compact set $K\subset \R$,  e.g., $\mathbb{C}(K)$, $\mathbb{D}(K)$, with the sup norm 
\[
\|x\| = \sup_{t\in K}|x_t|,\quad x=(x_t)_{t\in K}\in {\cal X}.
\] 
Without loss of generality, we assume that $K=[0,1]$ for notational simplicity, so we consider the case where $X^\th$ is a continuous time stochastic process on $[0,1]$. 

\subsection{Functionals of expected integrals}
In this section, we are interested in the expected integral-type functionals 
\[
H(\th) = \E\l[\int_0^1 V_{\th}(X_t^{\th},t)\,\df t\r],
\]
for a function $V:\R^d\times [0,1]\to \R$. This is the case where $H(\th) = \E[h(X^\th,\th)]$ with 
\[
h(x,\th) = \int_0^1V_\th(x_t,t)\,\df t,\quad x\in {\cal X}
\]
The marginal distribution of a stochastic process $X^\th$ is generally not explicit and the expectation 
$\E[V_\th(X_t^\th,t)]$ is not clear. In such a case, Theorem \ref{thm:cond} can be useful to the analysis if the assumption A4($q$) can be confirmed. 

\begin{example}\label{ex:diff}
Suppose that $X^\th$ satisfies the following 1-dim stochastic differential equation: 
\[
X^\th_t = x(\th) + \int_0^t a(X^\th_s,\th)\,\df s + \int_0^t b(X^\th_s,\th)\,\df W_s, 
\]
where $W$ is a Wiener process and $a,b$ are functions with some ``good" regularities and $\th\in \R^p$ is the unknown parameter. According to Section \ref{sec:sde}, under some regularities, 
the {\it derivative process} $Y^\th = (Y^\th_t)_{t\in [0,1]}$ is given as follows. 
\[
Y_t^\th = \dot{x}(\th) + \int_0^t A(X_s,Y_s,\th)\,\df s + \int_0^t B(X_s,Y_s,\th)\,\df W_s, 
\]
where $\dot{x}(\th) =\n_\th x(\th)$ and $A, B$ are $\R$-valued functions on $\R\times \R^p\times \R$, which are of the form  
\[
A(x,y,\th)= \n_x a(x,\th) y + \dot{a}(x,\th);\quad B(x,y,\th)= \n_xb(x,\th)y + \dot{b}(x,\th). 
\]
This $Y^\th$ can satisfy
\[
\E\|X^{\th+u} - X^\th - u^\top Y^\th \|^p \lesssim |u|^{2p}. 
\]
for each $u\in \R^p$ and any $p\ge 2$, which implies A4($p$). 
\end{example}

\begin{thm}\label{thm:int-V}
Suppose that there exists an integer $n\ge 1$ and $\th\in \Theta$ such that 
$V^{(n)}_\th(x,t):=\n_x^n V_\th(x,t)$ is Lipschitz continuous with respect to $x$, uniformly in $t\in [0,1]$: 
\[
\sup_{t\in [0,1]}|V^{(n)}_\th(x,t) - V^{(n)}_\th(y,t)|\lesssim |x-y|, \quad x,y\in \R. 
\]
Moreover, suppose that A4($q$) holds for some $q\ge 2n$, and that 
\[
\sup_{t\in [0,1]}|V^{(k)}_\th(X^{\th}_t,t)| \in L^r,\quad k=1,\dots,n, 
\]
for some $r>1$ with $1/r + 1/q = 1$. 
Then, condition \eqref{mvt} holds with 
\[
G_{\th}=\int_0^1 V^{(1)}_\th(X^\th_t,t)Y_t^{\th}\,\df t. 
\]
\end{thm}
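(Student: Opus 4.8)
The plan is to write $h(x,\th)=\int_0^1 V_\th(x_t,t)\,\df t$ and Taylor-expand the integrand in the spatial variable around $X^\th_t$, isolating the first-order term (which will produce $G_\th$) from the genuinely higher-order contributions. Throughout I fix the base point $\th$ (playing the role of $\th_0$) and abbreviate $\Delta_t:=X^{\th+u}_t-X^\th_t$, so that $\|X^{\th+u}-X^\th\|=\sup_t|\Delta_t|$. Since $V^{(n)}_\th$ is Lipschitz in $x$ uniformly in $t$, Taylor's formula gives, for each $t$,
\begin{align*}
V_\th(X^{\th+u}_t,t)-V_\th(X^\th_t,t)=\sum_{k=1}^n\frac1{k!}V^{(k)}_\th(X^\th_t,t)\big[\Delta_t^{\otimes k}\big]+R_{n,t},
\end{align*}
where the Lipschitz property yields $|R_{n,t}|\lesssim|\Delta_t|^{n+1}$ uniformly in $t$. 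Integrating in $t$ and subtracting $u^\top G_\th=\int_0^1 V^{(1)}_\th(X^\th_t,t)(u^\top Y^\th_t)\,\df t$, I obtain the decomposition
\begin{align*}
h(X^{\th+u},\th)-h(X^\th,\th)-u^\top G_\th = I_1+I_2+I_3,
\end{align*}
with $I_1=\int_0^1 V^{(1)}_\th(X^\th_t,t)(\Delta_t-u^\top Y^\th_t)\,\df t$, with $I_2=\sum_{k=2}^n\frac1{k!}\int_0^1 V^{(k)}_\th(X^\th_t,t)[\Delta_t^{\otimes k}]\,\df t$, and with $I_3=\int_0^1 R_{n,t}\,\df t$ (the sum $I_2$ is empty when $n=1$). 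The aim is to bound $|\E[I_1]|$ by the $L^q$-term on the right of \eqref{mvt} and to show $\E|I_2|+\E|I_3|=o(|u|)$, which will then play the role of $r_u$.

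The first-order term is the crux. Bounding the $\df t$-integral by the supremum over $t$ and applying Hölder's inequality with the conjugate pair $1/r+1/q=1$, I would estimate
\begin{align*}
\E|I_1|\le \E\Big[\sup_t|V^{(1)}_\th(X^\th_t,t)|\cdot\|X^{\th+u}-X^\th-u^\top Y^\th\|\Big]\lesssim \|X^{\th+u}-X^\th-u^\top Y^\th\|_{L^q},
\end{align*}
using $\sup_t|V^{(1)}_\th(X^\th_t,t)|\in L^r$. This is exactly the first term on the right-hand side of \eqref{mvt}, and the same Hölder estimate with $Y^\th\in L^q$ (from A4($q$)) in place of the increment shows $\E|G_\th|<\infty$, i.e. $G_\th\in L^1$ as required.

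For the remaining terms I use that A4($q$) forces $\|X^{\th+u}-X^\th\|_{L^q}\le\|X^{\th+u}-X^\th-u^\top Y^\th\|_{L^q}+|u|\,\|Y^\th\|_{L^q}=O(|u|)$, so every moment of the increment up to order $q$ is $O(|u|)$. The remainder obeys $\E|I_3|\lesssim\E\big[\|X^{\th+u}-X^\th\|^{n+1}\big]$, and since $q\ge 2n\ge n+1$ this is $O(|u|^{n+1})=o(|u|)$. For each higher-order term in $I_2$ ($2\le k\le n$) I estimate $\E|\cdots|\le\E[\sup_t|V^{(k)}_\th(X^\th_t,t)|\cdot\|X^{\th+u}-X^\th\|^k]$ and combine the integrability of $V^{(k)}_\th$ with the increment moments: because $q\ge 2n\ge 2k$, the factor $\|X^{\th+u}-X^\th\|^k$ is available with norm $O(|u|^k)$, so each such term is $O(|u|^k)=o(|u|)$ for $k\ge2$. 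Setting $r_u:=\E|I_2|+\E|I_3|=o(|u|)$ and collecting the three bounds yields \eqref{mvt} with $G_\th=\int_0^1 V^{(1)}_\th(X^\th_t,t)Y^\th_t\,\df t$.

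The hard part will be the integrability bookkeeping in $I_2$: one must pair the moments of the increment $\|X^{\th+u}-X^\th\|$ — which A4($q$) controls only up to order $q$ — against the integrability of the derivatives $V^{(k)}_\th$ through Hölder's inequality. This is precisely why the hypothesis $q\ge 2n$ (rather than merely $q>1$) is imposed, so that every cross term $V^{(k)}_\th[\Delta^{\otimes k}]$ with $2\le k\le n$, together with the order-$(n+1)$ Lipschitz remainder, falls within the available moments and is genuinely $o(|u|)$; the first-order term, by contrast, needs only the single conjugate relation $1/r+1/q=1$. In the diffusion setting of Example \ref{ex:diff}, where the increment has all moments of order $O(|u|)$, this bookkeeping is automatic.
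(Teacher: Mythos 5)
Your proposal is correct and follows essentially the same route as the paper's proof: Taylor expansion of $V_\th$ in the spatial variable around $X^\th_t$, H\"older with the conjugate pair $(r,q)$ for the first-order term, and moment bounds from A4($q$) (via $\|X^{\th+u}-X^\th\|_{L^q}=O(|u|)$) for the terms of order $k\ge 2$ and the Lipschitz remainder. The only difference is cosmetic — you absorb the $n$-th order coefficient at the base point into the sum and isolate a remainder bounded by $|\Delta_t|^{n+1}$ via the Lipschitz hypothesis, whereas the paper keeps the Lagrange form at an intermediate point $\wt X^{\th,u}_t$ and then compares it back to $X^\th_t$; the H\"older bookkeeping for the middle terms (choosing $s$ with $1/s+k/q=1$) that you defer to "the hard part" is exactly what the paper carries out.
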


\begin{proof}
We shall check condition \eqref{mvt} in Theorem \ref{thm:cond}. 
In the proof, for notational simplicity we consider only the case where $d=1$. 
The general case can be shown in a similar manner. 

Let
\[
R_t^{\th,u}:= X_t^{\th + u} - X_t^\th - Y_t^\th, \quad u\in \R^p. 
\]
We note that $\|R^{\th,u}\|_{L^q} \lesssim |u|^q$ by A4($q$). 
It follows from Taylor's formula that
\begin{align*}
&\E\l[h(X^{\th + u},\th) - h(X^\th,\th) - u^\top \int_0^1 V^{(1)}_\th(X^\th_t,t)Y_t^{\th}\,\df t \r] \\
&= E\l[\int_0^1 \l\{V_\th(X_t^{\th+u},t) - V_\th(X_t^\th,t) - u^\top V^{(1)}_\th(X^\th_t,t)Y_t^{\th}\r\}\,\df t \r] \\
&= \E\l[\int_0^1 V^{(1)}_\th(X_t^\th,t) R_t^{\th,u}\,\df t\r] 
+\sum_{k=2}^{n-1}\frac{1}{k!}\E\l[\int_0^1 V^{(k)}_\th(X^\th,t)(R_t^{\th,u} + u^\top Y_t^\th)^k\,\df t\r] \\
&\qquad + \frac{1}{n!}\E\l[\int_0^1 V^{(n)}_\th(\wt{X}_t^{\th,u},t)(R_t^{\th,u} + u^\top Y_t^\th)^n\,\df t\r],  
\end{align*}
where $\wt{X}_t^{u} = X^\th + \eta_\th^u (X^{\th + u} - X^\th)$ for some random number $\eta_\th^u \in [0,1]$.  

Firstly, it follows from H\"older's inequality that for $q,r>1$ with $1/q + 1/r=1$, 
\[
\l|\E\l[\int_0^1 V^{(1)}_\th(X_t^\th,t) R_t^{\th,u}\,\df t\r] \r|
\lesssim \l\|\sup_{t\in [0,1]}V ^{(1)}_\th(X_t^\th,t)\r\|_{L^r} \|R^{\th,u}\|_{L^q}
\]
Secondly, noticing that 
\begin{align*}
\|\wt{X}^{\th,u} - X^\th\|_{L^2} \le \|R^{\th,u}\|_{L^2} + |u| \|Y^\th\|_{L^2} = O(|u|),\quad |u|\to 0, 
\end{align*}
we see that 
\begin{align*}
&\l|\E\l[\int_0^1 V^{(n)}_\th(\wt{X}_t^{u},t)(R_t^{\th,u} + u^\top Y_t^\th)^n\,\df t\r] - \E\l[\int_0^1 V^{(n)}_\th(X_t^\th,t)(R_t^{\th,u} + u^\top Y_t^\th)^n\,\df t\r]\r| \\
&\lesssim 2^{n-1}\int_0^1 \E\l[ |\wt{X}_t^{u} - X_t^\th|\l\{|R_t^{\th,u}|^n + |u|^n |Y_t^\th|^n\r\}\r] \,\df t\\
&\lesssim \|\wt{X}^{\th,u} - X^\th\|_{L^2}\l\{ \| R_\th^u\|_{L^{2n}}^{n} + |u|^n\| Y^\th\|_{L^{2n}}^{n}\r\}
=o(|u|^n),\qquad \mbox{as $|u|\to 0$},  
\end{align*}
Finally, from the Schwartz inequality, it is easy to see that for each $k=2,\dots,n$, 
\begin{align*}
&\l|\E\l[\int_0^1 V^{(k)}_\th(X_t^\th,t)(R_t^{\th,u} + u^\top Y_t^\th)^k\r]\r| \\
&\le \int_0^1 \E\l[|V^{(k)}_\th(X_t^\th,t)|\cdot \|R^{\th,u} + u^\top Y^\th\|^k\r]\,\df t \\
&\le 2^{k-1}\int_0^1 \|V^{(k)}_\th(X_t^\th,t)\|_{L^s}\,\df t \cdot \l\{ \|R^{\th,u}\|_{L^q}^k + |u|^k\|Y^\th\|_{L^q}^k \r\} \\
&= o(|u|^k), 
\end{align*}
where $s>1$ with $1/s + k/q = 1$. Note that such an $s>1$ exists under our assumption 
since $(1 - k/q)^{-1} \ge n/(n-1)> 1$ when $q\ge 2n$. As a result, we have that 
\begin{align*}
\l|\E\l[h(X^{\th + u},\th) - h(X^\th,\th) - u^\top \int_0^1 V^{(1)}_\th(X^\th_t,t)Y_t^{\th}\,\df t \r] \r|
\lesssim \|R^{\th,u}\|_{L^p} + o(|u|^2), 
\end{align*}
which implies condition \eqref{mvt} in Theorem \ref{thm:cond} with 
$G_\th = \int_0^1 V^{(1)}_\th(X^\th_t,t)Y_t^{\th}\,\df t$.  Therefore, the proof is completed. 
\end{proof}

\begin{remark}
If the function $V_\th$ is a ``good" function such that a ``lower" derivative is Lipschitz continuous, 
then Theorem \ref{thm:int-V} requires only a ``small" $q\ge 2$ for A4$(q)$ to hold true. 
The more “violent” the function $V$ is, the stronger the integrability condition becomes.
\end{remark}

\begin{example}
Consider a 1-dim (ergodic) diffusion process $X^\th=(X_t)_{t\ge 0}$:  for a constant $x>0$,  
\[
X^{\th_0}_t = x + \int_0^t a(X^{\th_0}_s,\th_0)\,\df s + \int_0^t b(X^{\th_0}_t)\,\df W_t,  
\]
where $\th_0\in \R$ is unknown, and consider the estimation of
\[
H(\th_0) =  \int_0^T e^{-r t}U(X^{\th_0}_t)\,\df t, 
\]
for a constant  $r>0$ and a function $U\in C(\R)$, 
which is the case where $V_\th(x,t) = e^{-r t}U(x)\I_{[0,T)}(t)$. See also Example \ref{ex:ou2} for practical applications of this example. 

Assume that we have continuous data $\{X_t\}_{t\in [0,T]}$, and consider the {\it long term asymptotics}: $T\to \infty$. Then, under some regularities, the maximum likelihood estimator of $\th$, say $\wh{\th}_T$, satisfies
\[
\sqrt{T}(\wh{\th} - \th_0) \toD N(0,I^{-1}(\th_0)), \quad T\to \infty, 
\]
where $I(\th) = \int_{\R} \frac{a^2(x,\th)}{b^2(x)}\,\pi(\df x)$ for a stationary distribution $\pi$, 
and it can be estimated by, e.g., 
\[
\wh{I}_T(\th) = \frac{1}{T}\int_0^T \frac{a^2(X_t,\th)}{b^2(X_t)}\,\df t \toP I(\th), \quad T\to \infty, 
\]
uniformly in $\th\in \Theta$ (see, e.g., Kutoyants \cite{k04}). 
Therefore, considering the derivative process $Y^\th$ given in Example \ref{ex:diff}, we have that 
\[
\sqrt{T}(H(\wh{\th}_T) - H(\th_0)) \toD N(0, C_{\th_0}^2 I(\th_0)^{-1}), \quad T\to \infty,  
\]
where 
\[
C_\th = \E\l[\int_0^T e^{-r t}\n_x U(X_t^\th)Y^\th_t\,\df t\r]
\]
Therefore an $\a$-confidence interval for $H(\th_0)$ is given by 
\[
\l[H(\wh{\th}_T) - \frac{z_{\a/2}}{\sqrt{T}} C_{\wh{\th}_T}\wh{I}_T(\wh{\th}_T)^{-1/2}, 
H(\wh{\th}_T) + \frac{z_{\a/2}}{\sqrt{T}} C_{\wh{\th}_T}\wh{I}_T(\wh{\th}_T)^{-1/2} \r]. 
\]
In practice, $H(\wh{\th}_T)$ and $C_{\wh{\th}_T}$ will be computed by Monte Carlo simulations by a suitable discretization if needed. Of course, the same argument is possible in the case where $X^\th$ is discretely observed; cf. Example \ref{ex:ou2}. 
\end{example}

\subsection{Functionals of integrated professes}
Let us consider the following quantity: for a function $\vp_\th:\R\to \R$ and $T\in (0,1]$, 
\[
H(\th) = \E\l[\vp_{\th}\l(\frac{1}{T}\int_0^T X_t^{\th} \,\df t\r)\r]. 
\]
We use the following notation for simplicity:  
\[
X_* = \frac{1}{T}\int_0^T X_t\,\df t,  
\]
for a process $X=(X_t)_{t\in [0,1]}$. Then, we have the following theorem. 
\begin{thm}\label{thm:asian}
Suppose that there exists an integer $n\ge 1$ and $\th\in \Theta$ such that 
$\vp_{\th}^{(n)}(x)$ is Lipschitz continuous: 
\[
|\vp_{\th}^{(n)}(x) - \vp_{\th}^{(n)}(y)|\lesssim |x-y|, \quad x,y\in \R. 
\]
Moreover, suppose that A4($q$) holds for some $q\ge 2n$, and that 
\[
\vp_{\th}^{(k)}(X^{\th}_*) \in L^r,\quad k=1,\dots,n, 
\]
for the constant $r>1$ with $1/r + 1/q = 1$. 
Then, condition \eqref{mvt} holds with 
\[
G_\th = \vp^{(1)}_\th (X^\th_*)Y^\th_*. 
\]
\end{thm}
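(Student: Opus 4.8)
The plan is to verify condition \eqref{mvt} of Theorem \ref{thm:cond} with $G_\th=\vp^{(1)}_\th(X^\th_*)Y^\th_*$, following the same scheme as the proof of Theorem \ref{thm:int-V}. The only structural difference is that the time-integral $\int_0^1 V_\th(X^\th_t,t)\,\df t$ is now replaced by the single composition $\vp_\th(X^\th_*)$, so once a preliminary reduction is made the argument collapses to an ordinary Taylor expansion of $\vp_\th$ around the random point $X^\th_*$.

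First I would set $R^{\th,u}:=X^{\th+u}-X^\th-u^\top Y^\th$ and observe that, since time-averaging over $[0,T]$ is linear and satisfies $|x_*|\le\|x\|$ for every $x\in{\cal X}$ (recall $x_*=\frac1T\int_0^T x_t\,\df t$ and $\|\cdot\|$ is the sup norm), one has $X^{\th+u}_*-X^\th_*-u^\top Y^\th_*=(R^{\th,u})_*=:R^{\th,u}_*$ with $\|R^{\th,u}_*\|_{L^q}\le\|R^{\th,u}\|_{L^q}$. By A4($q$) the right-hand side is $o(|u|)$, and likewise $\|X^{\th+u}_*-X^\th_*\|_{L^q}\le\|R^{\th,u}_*\|_{L^q}+|u|\,\|Y^\th_*\|_{L^q}=O(|u|)$. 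This passage from sup-norm control to control of the time-average is the one genuinely new point; everything downstream transcribes Theorem \ref{thm:int-V}.

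Next I would Taylor-expand $\vp_\th$ to order $n$ at $X^\th_*$, writing $\Delta_*:=X^{\th+u}_*-X^\th_*=R^{\th,u}_*+u^\top Y^\th_*$:
\[
\vp_\th(X^{\th+u}_*)-\vp_\th(X^\th_*)=\sum_{k=1}^{n-1}\frac{1}{k!}\vp^{(k)}_\th(X^\th_*)\Delta_*^k+\frac{1}{n!}\vp^{(n)}_\th(\wt{X}_*)\Delta_*^n,
\]
with $\wt{X}_*=X^\th_*+\eta\,\Delta_*$ for some random $\eta\in[0,1]$. The $k=1$ term splits as $\vp^{(1)}_\th(X^\th_*)R^{\th,u}_*+u^\top\vp^{(1)}_\th(X^\th_*)Y^\th_*$; subtracting $u^\top G_\th$ cancels the second summand, and by H\"older's inequality with $1/r+1/q=1$ the first leaves $|\E[\vp^{(1)}_\th(X^\th_*)R^{\th,u}_*]|\le\|\vp^{(1)}_\th(X^\th_*)\|_{L^r}\|R^{\th,u}_*\|_{L^q}\lesssim\|R^{\th,u}\|_{L^q}$, which is exactly the dominant term on the right of \eqref{mvt}.

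The remaining terms must all be shown to be $o(|u|)$, and this is where the integrability bookkeeping is the main obstacle. For $2\le k\le n-1$ I would expand $\Delta_*^k=(R^{\th,u}_*+u^\top Y^\th_*)^k$ and estimate each summand by H\"older, pairing $\vp^{(k)}_\th(X^\th_*)$ in $L^{s_k}$ with powers of $R^{\th,u}_*$ and $Y^\th_*$ in $L^q$, where $s_k=(1-k/q)^{-1}$; since $q\ge 2n\ge 2k$ one has $s_k\in(1,2]$, so the exponent is admissible and the assumed integrability of the derivatives keeps the relevant norms finite. The leading piece of this bound is $|u|^k\|Y^\th_*\|_{L^q}^k=O(|u|^k)=o(|u|)$ for $k\ge2$. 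For the $n$-th remainder I would write $\vp^{(n)}_\th(\wt{X}_*)=\vp^{(n)}_\th(X^\th_*)+[\vp^{(n)}_\th(\wt{X}_*)-\vp^{(n)}_\th(X^\th_*)]$; the first part is handled as above with $k=n$, and for the second I would invoke the Lipschitz hypothesis, $|\vp^{(n)}_\th(\wt{X}_*)-\vp^{(n)}_\th(X^\th_*)|\lesssim|\wt{X}_*-X^\th_*|\le|\Delta_*|$, together with a Schwarz bound and $\|\Delta_*\|_{L^2}=O(|u|)$ to obtain $\lesssim\|\Delta_*\|_{L^2}\{\|R^{\th,u}_*\|_{L^{2n}}^n+|u|^n\|Y^\th_*\|_{L^{2n}}^n\}=o(|u|^n)$. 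Collecting the estimates yields \eqref{mvt} with remainder $r_u=o(|u|)$ and $G_\th=\vp^{(1)}_\th(X^\th_*)Y^\th_*$, as claimed. The delicate point throughout is keeping the three competing integrability budgets — the order $q$ from A4($q$), the conjugate order $r$ of the $\vp$-derivatives, and the extra powers produced by $\Delta_*^k$ — simultaneously satisfiable, which is precisely what $q\ge 2n$ guarantees.
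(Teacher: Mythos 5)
Your proof is correct and takes essentially the same route as the paper: the Jensen-type reduction $|X^{\th+u}_*-X^\th_*-u^\top Y^\th_*|\le\|X^{\th+u}-X^\th-u^\top Y^\th\|$ identifies $Y^\th_*$ as the derivative of $X^\th_*$, and the subsequent Taylor expansion with H\"older estimates is precisely the argument of Theorem \ref{thm:int-V} that the paper's proof invokes. Your write-up simply fills in the details the paper defers to that earlier proof.
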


\begin{proof}
It follows from Jensen's inequality that   
\begin{align*}
|X^{\th+u}_* - X^\th_* - u^\top Y^\th_*| &\le \frac{1}{T}\int_0^T |X^{\th+u}_t - X^\th_t - u^\top Y^\th_t|\,\df t  \le \|X^{\th + u} - X^\th - u^\top Y^\th\|, 
\end{align*}
with probability one. 
Hence, $Y^\th_* = \frac{1}{T}\int_0^T Y^\th_t\,\df t$ is the derivative of $X^\th$ w.r.t. $\th$.  

We can take the same argument as in Theorem \ref{thm:int-V}: we use Taylor's formula and  H\"older's inequality to obtain
\begin{align*}
&\l|\E[h(X^{\th+u}) - h(X^\th) -  \vp^{(1)}(X^\th_*)u^\top Y^\th_*]\r|  \\
&\le \E\l| \vp^{(1)}(X^{\th}_*)(X^{\th+u}_* - X^\th_* - u^\top Y^\th_*)\r| 
+ \sum_{k=1}^{n-1}\frac{1}{k!}\E\l| \vp^{(k)}(X^{\th}_*)(X^{\th+u}_* - X^\th_*)^k \r| \\
&\quad + \frac{1}{n!} \E\l| \vp^{(k)}(\wt{X}^{\th,u}_*)(X^{\th+u}_* - X^\th_*)^n \r|. 
\end{align*}
Then, the same argument as in the proof of Theorem \ref{thm:int-V} 
enables us to check condition \eqref{mvt} in Theorem \ref{thm:cond}. 

\end{proof}

\begin{example}\label{ex:asian}
When $X^\th$ is a stock price, the price of an {\it Asian call option} for $X^\th$ with maturity $T$ and strike price $K$ is given by 
\[
{\cal C}_{T,K} = \E\l[\max \l\{ \frac{1}{T}\int_0^T X_t^{\th_0}\,\df t - K,0\r\} \r]
\]
where $\d>0$ is an interest rate and $\E$ is usually taken as an expectation with respect to the risk-neutral probability. This is approximated as 
\[
H_\e(\th_0) := \E\l[\vp_\e\l(  \frac{1}{T}\int_0^T X_t^{\th_0}\,\df t \r)\r]
\]
by a function $\vp_\e(x) \in C^\infty(\R)$ such that 
\[
\sup_{x}|\vp_\e(x) - \max\{x-K,0\}| \to 0,\quad \e\to 0. 
\]
For example, we can take a function $\vp_\e(x) = 2^{-1}(\sqrt{(x-K)^2 + \e^2} + x - K)$. 
Then, it follows by the dominated convergence theorem that $H_\e(\th) \to {\cal C}_{T,K}$ as $\e\to 0$ if $X^\th\in L^1$. 

Assume that a suitable estimator of $\th_0\in \R^p$ is obtained, e.g., 
\[
\sqrt{T} (\wh{\th}_T - \th_0) \toD N(0,\Sigma),\quad T\to \infty, 
\]
for a positive-definite matrix $\Sigma\in \R^p\otimes \R^p$. 
Then, we can apply Theorem \ref{thm:asian} to $H_\e(\th)$, and letting $T\to \infty$ as well as $\e\to 0$,  we have 
\[
\sqrt{T}\l(H_\e(\wh{\th}_T) - H_\e(\th_0)\r) \toD  N(0,C_{\th_0}^\top \Sigma \,C_{\th_0}), 
\]
where 
\[
C_{\th} = \lim_{\e\to 0}\E\l[\frac{Y^\th_*}{2}\l\{\frac{(X^\th_* - K)}{\sqrt{(X^\th_* - K)^2 + \e^2}} + 1\r\}\r]  = \E\l[\frac{Y^\th_*}{2}\l\{\mathrm{sgn}(X^\th_* - K) + 1\r\} \r], 
\]
with $\mathrm{sgn}(z) = \I_{\{z>0\}} - \I_{\{z<0\}}$.  
Note that this quantity would be computed by Monte Carlo simulation in practice with $\th_0$ replaced by $\wh{\th}_T$, or some estimators based on discrete samples of $X^\th$ in practice. 
We will discuss when the condition A4$(q)$ holds when $X^\th$ is a semimartingale with jumps in Section \ref{sec:sde}. 
\end{example}

\begin{remark}\label{rem:general}
According to the proof of Theorem \ref{thm:asian}, we can consider more general functionals for $X^\th_*$ under some smoothness conditions for $\vp_\th$. 
That is, suppose that there exists an $\R^p$-valued random variable $\wt{Y}^\th$ such that the following inequality holds:
\begin{align}
|X^{\th+u}_* - X^\th_* - u^\top \wt{Y}^\th|\lesssim \|X^{\th+u} - X^\th - u^\top Y^\th\| + |u|^{1+\d}\quad a.s., \label{eq:X-star}
\end{align}
for $\d>0$, and the derivative is $Y^\th$. Then, the same proof as that of Theorem \ref{thm:asian} works with 
\begin{align*}
G_\th = \vp^{(1)}_\th(X^\th_*) \wt{Y}^\th. 
\end{align*}
For example, let  
\[
X^\th_* = \int_0^T U(X^\th_t)\,\df t
\]
for $T>0$ and $U\in C^2(\R)$ be a function with bounded derivatives. Then we find that
\[
\wt{Y}^\th = \int_0^T U^{(1)}(X^\th_t)Y^\th_t\,\df t, 
\] 
since it follows that 
\begin{align*}
|X^{\th+u}_* - X^\th_*  - u^\top \wt{Y}^\th| &\le \int_0^T |U(X^{\th + u}_t) - U(X^\th_t) - u^\top U^{(1)}(X^\th_t)Y^\th_t|\,\df t \\
&\lesssim \int_0^T |U^{(1)}(X^\th_t)(X^{\th+u}_t - X^\th_t - u^\top Y^\th_t)\,\df t + |u|^2 \\
&\lesssim \|X^{\th+u} - X^\th - u^\top Y^\th\| + |u|^2. 
\end{align*}
This argument can include Theorem \ref{thm:int-V}. 
\end{remark}

\begin{remark}
You might also be interested in the case where $X^\th_*$ is an extreme-type functional such as 
$X_*^\th = \inf_{s \le t} X^\th_s$, which is important when, e.g., $\vp(x) = \I_{\{x < 0\}}$, the function $H(\th) =\E[\vp(X_*^\th)]$ stands for the hitting time distribution: 
\[
H(\th) = \P(\tau^\th \le t),\quad \tau^\th = \inf\{t>0\,|\,X^\th_t < 0\}, 
\]
or we can approximate $\vp$ with a bounded smooth function such as, e.g.,  $\vp_\e(x) = [1 + e^{-x/\e}]^{-1}\ \to \vp(x)\ (\e\to 0)$, 
among others. 

When $X^\th$ is a continuous diffusion process, Gobet and Kohatsu \cite{gk03} obtain a derivative of $\P(\tau^\th \le t)$ via Malliavin Calculus. 
However, in our approach, it is not so easy to find a suitable random variable $\wt{Y}^\th$ satisfying the inequality \eqref{eq:X-star}, except for a trivial case where the derivative process $Y^\th$ is a constant. 
One might expect that $\wt{Y}^\th = \sup_{s\le t}Y_s^\th$ in general, but it fails. 
This important  case is an open problem.  
\end{remark}

\section{Expected functionals of semimartingales}\label{sec:sde}

\subsection{Stochastic differential equations with jumps}

On a stochastic basis $(\Omega,\F,\mathbb{F},\P)$ with a filtration $\mathbb{F}=(\F_t)_{t\ge 0}$, 
consider a $1$-dim stochastic process $X=(X_t)_{t\in [0,T]}$ that satisfies the following stochastic differential equation (SDE) with a  multidimensional parameter $\th\in \Theta\subset \R^p$:  
\begin{align}
X^\th_t = x(\th) + \int_0^t a(X^\th_s,\th)\,\df s + \int_0^t b(X^\th_s,\th)\,\df W_s 
+\int_0^t \int_{E} c(X^\th_{s-},z,\th)\,\wt{N}(\df t,\df z),  \label{sde}
\end{align}
where $E=\R \setminus \{0\}$; $x:\Theta \to \R$; $a:\R\times \Theta\to \R$, $b:\R \times \Theta \to \R\otimes \R$ and $c:\R\times E\times \Theta\to \R$; $W$ is a $\mathbb{F}$-Wiener process. 
Moreover, $\wt{N}(\df t,\df z):=N(\df t,\df z) - \nu(z)\,\df z\df t$, which is the {\it compensated Poisson random measure}, where $N$ is a Poisson random measure associated with a $\mathbb{F}$-L\'evy process, say $Z=(Z_t)_{t\ge 0}$ with the L\'evy density $\nu$: 
\[
N(A\times (0,t]) = \sum_{s \le t}\I_{\{\D Z_s\in A\}},\quad A\subset E,  
\]
and $\E[N(\df t,\df z)] = \nu(z)\,\df z\df t$. 

In what follows, we assume that $\nu$ is {\it essentially} known:   
some cases can be rewritten into a model for a known $\nu$ even if $\nu$ has some unknown parameters (see Remark \ref{rem:unknown} below). However, if it is not the case, the situation may be totally different from ours, and the argument in this section would no longer work; see Remark \ref{rem:unknown2}.

\begin{remark}\label{rem:unknown}
Some cases where the L\'evy density $\nu$ depends on an unknown parameter, say $\nu_\th$, can be rewritten into the form of \eqref{sde} with a known L\'evy process by changing the coefficients $a$ and $c$, suitably.   
For example, consider the following SDE: 
\begin{align}
\df X_t = a(X_t)\,\df t + b(X_t)\,\df W_t +  \int_E c(X_{t-},z)\,N_\th(\df t,\df z), \label{sde2}
\end{align}
where $N_\th$ is the Poisson random measure associated with a compound Poisson process of the form 
$Z^\th_t = \sum_{i=1}^{N_t} U^\th_i$ such that $N$ is a Poisson process with intensity $\la_0$, and the $U_i^\th$'s are i.i.d. sequences with probability density $f_\th$ with $\E[U^\th_i] = \eta$ and $Var(U^\th_i) = \z^2$. 
Suppose that $\la_0$ is known, but $\th=(\eta,\z)$ is unknown. 
In this case, we can rewrite $Z^\th\,(=Z^{(\eta,\z)})$ as 
\[
Z^{(\eta,\z)}_t = \sum_{i=1}^{N_t} (\z U_i^{(0,1)} + \eta) = \int_0^t \int_E (\z z + \eta)\,N_{(0,1)}(\df s,\df z),  
\]
where $N_{(0,1)}$ is the Poisson random measure associated with $Z^{(0,1)}$. 
Then, the SDE \eqref{sde2} is written as 
\begin{align*}
\df X_t &= a(X_t)\,\df t + b(X_t)\,\df W_t +  \int_E c(X_{t-},\z z+\eta)\,N_{(0,1)}(\df t,\df z) \\
&= \l[a(X_t) +  \la_0\int_E c(X_t,\z z+\eta)f_{(0,1)}(z)\,\df z\r]\,\df t+ b(X_t)\,\df W_t  \\
&\qquad +  \int_E c(X_{t-}, \z z+\eta)\,\wt{N}_{(0,1)}(\df t,\df z),   
\end{align*}
where the L\'evy density $\la_0 f_{(0,1)}(z)$ is known. See also Example \ref{ex:ou}. 
\end{remark}

The semimartingale $X^\th$ in \eqref{sde} is a ${\cal X}=\mathbb{D}([0,T])$-valued random element. 
In what follows, we consider a metric space $({\cal X},\|\cdot\|)$  with the sup norm: 
\[
\|X^\th\|=\|X^\th\|_T := \sup_{t\in [0,T]}|X_t^\th|. 
\] 

We make some assumptions. 

\begin{asb}\label{asb:lg}
For each $x, z\in \R$,  
\[
|a(x,\th)| + |b(x,\th)|  \lesssim 1 + |x|;\quad |c(x,z,\th)|\lesssim |z|(1 + |x|), 
\]
uniformly in $\th\in \Theta$. 
\end{asb}

\begin{asb}\label{asb:smooth}
The functions $a,b$ and $c$ are twice differentiable in $x$, and 
the derivatives $\n_x^k a$ and $\n_x^k b$ $(k=1,2)$ are uniformly bounded. 
Moreover, $|\n_x^k c(x,z,\th)| \lesssim |z|$. 
\end{asb}

\begin{asb}\label{asb:dot}
The functions $a,b$ and $c$ are differentiable in $\th$.
It follows that 
\[
|\dot{a}(x,\th)| + |\dot{b}(x,\th)|\lesssim 1 + |x|; \quad |\dot{c}(x,z,\th)|\lesssim  |z|(1 + |x|), 
\]
uniformly in $\th\in \Theta$. 
\end{asb}

\begin{asb}\label{asb:nu-moment}
For any $p>0$, $\dis \int_{|z|>1} z^p\,\nu(z)\,\df z < \infty.$ 
\end{asb}

\begin{asb}\label{asb:moment}
For any $p>0$ and $T>0$, $\|X\|_T^p < \infty$. 
\end{asb}

\subsection{Derivative processes}

Let $Y^\th =(Y^\th_t)_{t\ge 0}$ be a $p$-dim stochastic process satisfying the following SDE:  $Y^\th_0=\dot{x}(\th)$, 
\begin{align}
\df Y^\th_t &= A(X^\th_t,Y^\th_t,\th)\,\df t + B(X^\th_t,Y^\th_t,\th)\,\df W_t + \int_{E} C(X^\th_{t-},Y^\th_{t-},z,\th)\,\wt{N}(\df t,\df z), \label{Y-sde} 
\end{align}
for each $\th\in \Theta$, where 
\begin{align*}
A(x,y,\th) &= \n_x a(x,\th)y + \dot{a}(x,\th); \\
B(x,y,\th) &= \n_x b(x,\th)y + \dot{b}(x,\th); \\
C(x,y,z,\th) &= \n_x c(x,z,\th)y + \dot{c}(x,z,\th). 
\end{align*}

In this section, we will show that the above $Y^\th=(Y^\th_t)_{t\ge 0}$ can be 
the {\it derivative process} of $X^\th$ with respect to $\th$ in the sense of $L^q$. 
For that purpose, we shall give some preliminary lemmas.

\begin{lemma}\label{lem:M}
Let $g:\R\times \R\to \R$ be of polynomial growth. 
Then ,under B\ref{asb:moment}, it holds for $p=2^m\,(m\in \N)$ that 
\[
\E\l\| \int_0^t \int_E g(X_{s-},z)\,\wt{N}(\df s,\df z) \r\|_T^p \lesssim \E\l[\int_0^T\int_E |g(X_{s-},z)|^p\,\nu(z)\,\df z\df s\r]
\]
\end{lemma}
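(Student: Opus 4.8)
The plan is to prove the estimate by induction on $m$, where $p=2^m$, combining the Burkholder--Davis--Gundy (BDG) inequality (which controls the running supremum, hence the norm $\|\cdot\|_T$, directly) with a recursive treatment of the quadratic variation. Throughout, set
\[
M_t := \int_0^t\int_E g(X_{s-},z)\,\wt{N}(\df s,\df z),
\]
which under B\ref{asb:moment} and the polynomial growth of $g$ is a purely discontinuous square-integrable martingale whose quadratic variation is $[M]_T = \sum_{s\le T}(\D M_s)^2 = \int_0^T\int_E |g(X_{s-},z)|^2\,N(\df s,\df z)$. For the base case $m=1$ I would simply invoke BDG and the It\^o isometry for the compensated measure, giving $\E\|M\|_T^2 \lesssim \E[M]_T = \E\big[\int_0^T\int_E|g(X_{s-},z)|^2\,\nu(z)\,\df z\,\df s\big]$, which is the claim at $p=2$.

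For the inductive step, assume the estimate at level $2^{m-1}$ for every integrand of polynomial growth. BDG gives $\E\|M\|_T^p \lesssim \E[M]_T^{p/2}$, and I would then decompose $N = \wt{N} + \nu(z)\,\df z\,\df t$ to write
\[
[M]_T = \underbrace{\int_0^T\int_E |g(X_{s-},z)|^2\,\wt{N}(\df s,\df z)}_{=:\,\wt{M}_T} + \underbrace{\int_0^T\int_E |g(X_{s-},z)|^2\,\nu(z)\,\df z\,\df s}_{=:\,A_T}.
\]
Since $p/2\ge 1$, one has $[M]_T^{p/2}\lesssim |\wt{M}_T|^{p/2} + A_T^{p/2}$. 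The martingale part is exactly where the induction feeds back: $|g|^2$ is again of polynomial growth, so applying the lemma at level $2^{m-1}$ to the integrand $|g|^2$ and using $|\wt{M}_T|\le\|\wt{M}\|_T$ yields
\[
\E|\wt{M}_T|^{p/2}\le \E\|\wt{M}\|_T^{p/2}\lesssim \E\Big[\int_0^T\int_E \big(|g(X_{s-},z)|^2\big)^{p/2}\,\nu(z)\,\df z\,\df s\Big] = \E\Big[\int_0^T\int_E |g(X_{s-},z)|^p\,\nu(z)\,\df z\,\df s\Big],
\]
which is precisely the target right-hand side.

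It then remains to dominate the compensator term $\E A_T^{p/2}$, and this is the step I expect to be the main obstacle. Writing $\df\mu := \nu(z)\,\df z\,\df s$ as a measure on $[0,T]\times E$ of total mass $\Lambda := T\int_E\nu(z)\,\df z$, Jensen's inequality for $x\mapsto x^{p/2}$ against the probability measure $\mu/\Lambda$ gives pathwise $A_T^{p/2}\le \Lambda^{p/2-1}\int_0^T\int_E|g(X_{s-},z)|^p\,\nu(z)\,\df z\,\df s$, and taking expectations absorbs $\Lambda^{p/2-1}$ into the implicit constant, closing the induction. The delicate point is that this Jensen step needs $\mu$ to be finite, i.e.\ finite jump activity $\int_E\nu\,\df z<\infty$; for infinite-activity $\nu$ one must instead exploit the decay in $z$ of the relevant integrands together with B\ref{asb:nu-moment} (so that $\int_E(|z|^2\wedge|z|^p)\nu(z)\,\df z$ can be used), or read the estimate alongside the companion $L^2$-type term $\big(\E\int_0^T\int_E|g|^2\nu\big)^{p/2}$. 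Modulo this integrability bookkeeping, the recursive BDG scheme is routine.
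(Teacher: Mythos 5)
Your argument is a genuine proof, whereas the paper itself offers none: its ``proof'' of Lemma \ref{lem:M} is a bare citation of Shimizu and Yoshida \cite{sy06}, Lemma 4.1. The scheme you propose --- induction on $m$ with $p=2^m$, BDG to pass from $\E\|M\|_T^p$ to $\E[M]_T^{p/2}$, splitting $[M]_T=\wt{M}_T+A_T$, and feeding the martingale part back into the induction hypothesis with integrand $|g|^2$ --- is exactly the standard Bichteler--Jacod/Kunita argument and is, in substance, how the cited lemma is proved; the base case and the treatment of $\wt{M}_T$ are correct as written.

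The point you flag about $\E A_T^{p/2}$ is real, and you are right that it is where the statement, read literally for an arbitrary polynomial-growth $g$ and an arbitrary $\nu$, would fail: without finite activity one cannot apply Jensen against $\nu(z)\,\df z\,\df s$, and in general the companion term $\E\bigl(\int_0^T\int_E|g|^2\nu\,\df z\,\df s\bigr)^{p/2}$ cannot be dropped from a Kunita-type bound with a universal constant. The resolution in this paper's context is not B\ref{asb:nu-moment} alone but the structural assumptions B\ref{asb:lg}--B\ref{asb:dot}: every integrand to which the lemma is applied satisfies $|g(x,z)|\le |z|\,\rho(x)$ with $\rho$ of polynomial growth, and since $\int_E |z|^2\nu(z)\,\df z<\infty$ (small jumps by the L\'evy-measure property, large jumps by B\ref{asb:nu-moment}), the measure $|z|^2\nu(z)\,\df z\,\df s$ on $[0,T]\times E$ is finite. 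One then bounds $A_T\le \int_0^T\rho^2(X_s)\int_E|z|^2\nu(z)\,\df z\,\df s$, applies Jensen in $\df s$ to get $A_T^{p/2}\lesssim \int_0^T\rho^p(X_s)\,\df s$, and compares with the target right-hand side, which dominates a constant multiple of $\E\int_0^T\rho^p(X_s)\,\df s$ because $\int_E|z|^p\nu(z)\,\df z>0$; the constant is then allowed to depend on $\nu$, $T$ and $p$, which is consistent with the $\lesssim$ convention. So your proof closes once the lemma is read under the paper's standing hypotheses (or restated with $|g(x,z)|\lesssim|z|(1+|x|)^C$), and your suspicion that the bare statement needs this extra bookkeeping is well founded.
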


\begin{proof}
See Shimizu and Yoshida \cite{sy06}, Lemma 4.1.
\end{proof}

\begin{lemma}\label{lem:DX}
Suppose that assumptions B\ref{asb:lg} -- B\ref{asb:moment} hold, and that $\dot{x}(\th)$ is uniformly bounded on $\Theta$. 
Then, it follows for any $T>0$, $p\ge 2$ and $u\in \R^p$ with $\th+u\in \Theta$ that
\[
\E\|X^{\th+u} - X^\th\|_T^p \lesssim |u|^p. 
\]
\end{lemma}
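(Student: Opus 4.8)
The plan is to estimate $D_t := X^{\theta+u}_t - X^\theta_t$ directly from its SDE, using the Lipschitz/linear-growth assumptions B1–B3 together with the moment bounds B4–B5 and Lemma 4.1, and then close the estimate with Gronwall's inequality. First I would write $D_t$ as the difference of the two integral equations \eqref{sde}:
\begin{align*}
D_t &= [x(\theta+u) - x(\theta)] + \int_0^t [a(X^{\theta+u}_s,\theta+u) - a(X^\theta_s,\theta)]\,\df s \\
&\quad + \int_0^t [b(X^{\theta+u}_s,\theta+u) - b(X^\theta_s,\theta)]\,\df W_s \\
&\quad + \int_0^t\int_E [c(X^{\theta+u}_{s-},z,\theta+u) - c(X^\theta_{s-},z,\theta)]\,\wt{N}(\df s,\df z).
\end{align*}
Each coefficient increment I would split into a spatial part and a parameter part, e.g. $a(X^{\theta+u}_s,\theta+u)-a(X^\theta_s,\theta) = [a(X^{\theta+u}_s,\theta+u)-a(X^\theta_s,\theta+u)] + [a(X^\theta_s,\theta+u)-a(X^\theta_s,\theta)]$. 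The spatial increment is controlled by $|D_s|$ via the uniform boundedness of $\n_x a,\n_x b$ (B2) and the bound $|\n_x c|\lesssim |z|$, while the parameter increment contributes a term of size $|u|$ via the mean value theorem applied to $\dot a,\dot b,\dot c$ (B3), so that $|x(\theta+u)-x(\theta)|\lesssim |u|$ and the integrands pick up a factor $(1+\|X^\theta\|_s)$ times $|u|$.

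Next I would take the $p$-th moment of $\sup_{s\le t}$. For the drift I use Jensen/Hölder in time; for the Brownian term the Burkholder–Davis–Gundy inequality; and for the jump term Lemma \ref{lem:M} (valid for $p=2^m$, extended to general $p\ge 2$ by interpolation or by enlarging $p$ to the next power of two, which only weakens the bound). After applying B4 to handle the $\int_E |z|^p\,\nu(z)\,\df z$ factors arising from the jump coefficient and B5 to absorb the $\E\|X^\theta\|_T^{p}$ factors from the parameter-increment terms, I obtain an inequality of the schematic form
\[
\E\sup_{s\le t}|D_s|^p \;\lesssim\; |u|^p + \int_0^t \E\sup_{r\le s}|D_r|^p\,\df s.
\]
Applying Gronwall's inequality then gives $\E\|D\|_T^p = \E\sup_{t\le T}|D_t|^p \lesssim |u|^p$, as claimed.

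The main obstacle I expect is the bookkeeping for the jump term: Lemma \ref{lem:M} is stated only for $p=2^m$ and requires polynomial growth of the integrand in $X$, so I must verify that the spatially-differenced integrand $c(X^{\theta+u}_{s-},z,\theta+u)-c(X^\theta_{s-},z,\theta)$ factors through $|D_{s-}|$ and $|u|$ with the right $|z|$-weight, and that the resulting $|z|^p$-moments of $\nu$ are finite — which is exactly guaranteed by B\ref{asb:nu-moment}. A secondary subtlety is that the constant in Gronwall depends on $p$ and $T$ but must stay uniform in $u$ as $\th+u$ ranges over $\Theta$; this follows from the assumed \emph{uniformity in $\theta\in\Theta$} in B\ref{asb:lg}–B\ref{asb:dot} and the uniform boundedness of $\dot x$.
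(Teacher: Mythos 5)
Your proposal is correct and follows essentially the same route as the paper's proof: difference the two integral equations, bound the coefficient increments by $|D_s|$ plus a $(1+\|X\|)|u|$ term via the mean value theorem and B\ref{asb:lg}--B\ref{asb:dot}, apply Jensen, Burkholder--Davis--Gundy and Lemma \ref{lem:M}, absorb moments with B\ref{asb:nu-moment}--B\ref{asb:moment}, and close with Gronwall. The only (minor) difference is that you split each coefficient increment into spatial and parameter parts rather than applying the mean value theorem jointly in $(x,\th)$, and you are in fact slightly more careful than the paper in flagging that Lemma \ref{lem:M} is stated only for $p=2^m$ and must be extended to general $p\ge 2$.
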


\begin{proof}
It follows from Jensen's inequality that 
\begin{align}
|X^{\th+u}_t - X^\th_t|^p 
&\lesssim |x(\th+u) - x(\th)|^p + t^{p-1}\int_0^t |\wt{A}_s(u,\th)|^p\,\df s + \l|\int_0^t \wt{B}_t(u,\th)\,\df W_s\r|^p \notag \\
&\quad + \l|\int_0^t \int_{E}\wt{C}_s(u,z,\th)\,\wt{N}(\df s,\df z)\r|^p, \label{DX}
\end{align}
with 
\begin{align*}
\wt{A}_t(u,\th)&:= a(X^{\th+u}_t,\th+u) - a(X^{\th}_t,\th); \\
\wt{B}_t(u,\th)&:= b(X^{\th+u}_t,\th+u) - b(X^{\th}_t,\th); \\
\wt{C}_t(u,z,\th)&:= c(X^{\th+u}_t,z,\th+u) - c(X^{\th}_t,z,\th). 
\end{align*}
Then, since it holds that $|x(\th+u) - x(\th)|\lesssim |u|$ from the mean value theorem, 
Lemma \ref{lem:M} and Burkholder-Davis-Gundy's inequality yield  that 
\begin{align*}
\E\l\|X^{\th+u} - X^\th \r\|_T^p &\lesssim |u|^{p} + \int_0^T \E\l[ |\wt{A}_s(u,\th)|^p + |\wt{B}_s(u,\th)|^p\r]\,\df s +  \E\l[\int_0^T \int_{E}|\wt{C}_s(u,z,\th)|^p\,\nu(z)\,\df z\df s\r] 
\end{align*}
It follows from the mean value theorem and assumptions B\ref{asb:lg} -- B\ref{asb:dot} that 
\begin{align*}
|\wt{A}_t(u,\th)|^p &= |\n_xa(X^*,\th^*)(X_t^{\th+u} - X_t^\th) + \dot{a}(X^*,\th^*)^\top u|^p \\
&\lesssim |X_t^{\th+u} - X_t^\th|^p + \l(1 + \|X\|_T^p\r)|u|^p
\end{align*}
Hence, it follows from B\ref{asb:moment} that 
\[
\E|\wt{A}_t(u,\th)|^p \lesssim \E|X_t^{\th+u} - X_t^\th|^p + |u|^p
\]

Similarly, we also have that 
\begin{align*}
\E|\wt{B}_t(u,\th)|^p &\lesssim \E|X^{\th+u}_t - X^\th_t|^p +  |u|^p;\\ 
\E|\wt{C}_t(u,z,\th)|^p &\lesssim|z|^p\l(\E|X^{\th+u}_t - X^\th_t|^p +  |u|^p\r).
\end{align*}
Hence, assumption B\ref{asb:nu-moment} yields that 
\[
\E\l\|X^{\th+u} - X^\th \r\|_T^p \lesssim |u|^{p} + \int_0^T \E\l\|X^{\th+u} - X^\th \r\|_t^p \,\df t. 
\]
Finally, Gronwall's inequality completes the proof.
\end{proof}

The next theorem is the consequence of this section. 

\begin{thm}\label{thm:deriv-X}
Suppose that assumptions B\ref{asb:lg} -- B\ref{asb:moment} hold. 
Moreover, suppose that the initial value $x(\th)=X^\th_0$ is twice differentiable with respect to the bounded derivatives, and that 
the solution $Y^\th$ to \eqref{Y-sde} satisfies $\|Y^\th\|_T < \infty$ for any $T>0$. 
Then, for any $p\ge 2$, there exists a positive constant $C_p$ depending on $p$ such that 
\[
\E\l\|X^{\th+u} - X^\th - u^\top Y^\th \r\|_T^p \le C_p|u|^{2p},\quad h \in \R^p.  
\]
\end{thm}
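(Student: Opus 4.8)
The plan is to derive a stochastic differential equation for the error process
\[
R^{\th,u}_t := X^{\th+u}_t - X^\th_t - u^\top Y^\th_t
\]
and then close a Gronwall-type inequality for $\E\|R^{\th,u}\|_T^p$, exactly as in the proof of Lemma \ref{lem:DX} but now carrying the expansion to second order. Subtracting the defining equations \eqref{sde} (at $\th+u$ and at $\th$) and $u^\top$ times \eqref{Y-sde}, I obtain
\[
R^{\th,u}_t = R^{\th,u}_0 + \int_0^t \alpha_s\,\df s + \int_0^t \beta_s\,\df W_s + \int_0^t\int_E \gamma_s(z)\,\wt{N}(\df s,\df z),
\]
with $R^{\th,u}_0 = x(\th+u) - x(\th) - u^\top \dot x(\th)$ and, for instance, $\alpha_s = a(X^{\th+u}_s,\th+u) - a(X^\th_s,\th) - u^\top A(X^\th_s,Y^\th_s,\th)$, and analogously $\beta_s,\gamma_s(z)$ built from $b,B$ and $c,C$.

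The decisive algebraic step is to Taylor-expand each coefficient difference to second order in $(x,\th)$ and to check that the first-order terms are \emph{exactly} cancelled by the corresponding terms in $u^\top A, u^\top B, u^\top C$; this is precisely the reason behind the definitions of $A,B,C$. Writing $\Delta X_s := X^{\th+u}_s - X^\th_s$ and using $A(x,y,\th) = \n_x a(x,\th)y + \dot a(x,\th)$, one finds $\alpha_s = \n_x a(X^\th_s,\th)\,R^{\th,u}_s + \rho^a_s$, where the remainder $\rho^a_s$ collects the second-order Taylor terms and, by the boundedness of the second $x$-derivatives in B\ref{asb:smooth} together with the differentiability in B\ref{asb:dot}, satisfies $|\rho^a_s| \lesssim |\Delta X_s|^2 + |u|^2$. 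The same structure gives $\beta_s = \n_x b(X^\th_s,\th)R^{\th,u}_s + \rho^b_s$ with $|\rho^b_s|\lesssim |\Delta X_s|^2 + |u|^2$, and, crucially for the jump part, $\gamma_s(z) = \n_x c(X^\th_{s-},z,\th)R^{\th,u}_{s-} + \rho^c_s(z)$ with $|\rho^c_s(z)|\lesssim |z|(|\Delta X_{s-}|^2 + |u|^2)$, using $|\n_x^k c|\lesssim |z|$. The initial term is $O(|u|^2)$ by the assumed twice-differentiability of $x(\cdot)$ with bounded derivatives.

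Next I would estimate the $p$-th moment of $\|R^{\th,u}\|_T$, first for $p=2^m$, term by term: Jensen's inequality on the drift, the Burkholder--Davis--Gundy inequality on the Brownian integral, and Lemma \ref{lem:M} on the compensated jump integral, the latter absorbing the factor $|z|$ through B\ref{asb:nu-moment}. The linear-in-$R$ pieces have bounded coefficients and each contribute $\lesssim \int_0^T \E\|R^{\th,u}\|_s^p\,\df s$, whereas the remainder pieces contribute $\lesssim |u|^{2p}$: indeed $\E\|\Delta X\|_T^{2p}\lesssim |u|^{2p}$ is Lemma \ref{lem:DX} applied at the exponent $2p$, which is legitimate because B\ref{asb:moment} supplies all moments of $\|X\|_T$, and the $|u|^2$ contributions are of order $|u|^{2p}$ after raising to the power $p$. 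Collecting everything yields
\[
\E\|R^{\th,u}\|_T^p \lesssim |u|^{2p} + \int_0^T \E\|R^{\th,u}\|_s^p\,\df s.
\]
Since $T\mapsto \E\|R^{\th,u}\|_T^p$ is nondecreasing and finite (again by B\ref{asb:moment} and $\|Y^\th\|_T<\infty$), Gronwall's inequality gives $\E\|R^{\th,u}\|_T^p \le C_p|u|^{2p}$ for $p=2^m$, and the general case $p\ge 2$ follows by the Lyapunov inequality from any dyadic exponent $2^m\ge p$.

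The main obstacle is the bookkeeping of the second-order remainders together with the jump term: one must verify the exact first-order cancellation produced by $A,B,C$ and control the quadratic remainder of the jump coefficient in the form $|z|(|\Delta X_{s-}|^2 + |u|^2)$, so that Lemma \ref{lem:M} combined with B\ref{asb:nu-moment} and Lemma \ref{lem:DX} (at exponent $2p$) delivers the clean $|u|^{2p}$ bound. The apparently circular appearance of $\|\Delta X\|$ in the source term is harmless precisely because Lemma \ref{lem:DX} is already available, so it feeds the Gronwall argument as a known input rather than an unknown.
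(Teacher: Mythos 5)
Your proposal follows essentially the same route as the paper's proof: write the error process $R^{\th,u}$ as a semimartingale whose coefficients are the second-order Taylor remainders left after the first-order terms cancel against $u^\top A$, $u^\top B$, $u^\top C$, bound the drift by Jensen, the martingale parts by Burkholder--Davis--Gundy and Lemma \ref{lem:M}, feed in Lemma \ref{lem:DX} at exponent $2p$ to turn the $\|\Delta X\|^{2p}$ source terms into $|u|^{2p}$, and close with Gronwall for dyadic $p$. The one place you genuinely diverge is the passage from $p=2^m$ to general $p\ge 2$: the paper uses a binary expansion of $p$ and an iterated Cauchy--Schwarz argument, whereas you invoke the Lyapunov (monotonicity of $L^p$-norms) inequality $\E\|R\|_T^p \le \bigl(\E\|R\|_T^{2^m}\bigr)^{p/2^m}$ for any dyadic $2^m\ge p$, which is simpler and delivers the same $|u|^{2p}$ bound; this is a legitimate shortcut. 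One minor caveat, shared with the paper itself: bounding the quadratic remainders by $|\Delta X_s|^2+|u|^2$ (and by $|z|(|\Delta X_{s-}|^2+|u|^2)$ for the jump coefficient) uses boundedness of the mixed and pure $\th$-second derivatives, which is not literally contained in B\ref{asb:smooth}--B\ref{asb:dot}; you should state that you are assuming it, as the paper implicitly does.
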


\begin{proof}
First, we shall consider the case where $p=2^m\ (m\in \N)$. 
Applying Jensen's inequality to the $\df t$-integral part, we see that 
\begin{align}
|X^{\th+u}_t - X^\th_t - u^\top Y^\th_t|^p 
&\lesssim |x(\th+u) - x(\th) - u^\top \dot{x}(\th)|^p + t^{p-1}\int_0^t |\wt{A}_s(u,\th)|^p\,\df s \notag \\
&\quad + \l|\int_0^t \wt{B}_t(u,\th)\,\df W_s\r|^p + \l|\int_0^t \int_{E}\wt{C}_{s-}(u,z,\th)\,\wt{N}(\df s,\df z)\r|^p, \label{DX-Y}
\end{align}
where 
\begin{align*}
\wt{A}_t(u,\th)&:= a(X^{\th+u}_t,\th+u) - a(X^{\th}_t,\th) - u^\top [\n_x a(X^\th_t,\th)Y^\th_t+ \dot{a}(X^\th_t,\th)]; \\
\wt{B}_t(u,\th)&:= b(X^{\th+u}_t,\th+u) - b(X^{\th}_t,\th) - u^\top [\n_x b(X^\th_t,\th)Y^\th_t+ \dot{b}(X^\th_t,\th)]; \\
\wt{C}_t(u,z,\th)&:= c(X^{\th+u}_t,z,\th+u) - c(X^{\th}_t,z,\th) - u^\top [\n_x c(X^\th_t,z,\th)Y^\th_t+ \dot{c}(X^\th_t,z,\th)]. 
\end{align*}

Take $\sup_{t\in [0,T]}$ and the expectation $\E$ on both sides to obtain that 
\begin{align*}
\E\|X^{\th+u} - X^\th - u^\top Y^\th\|_T^p 
&\lesssim |u|^{2p} + \int_0^T \E|\wt{A}_t(u,\th)|^p\,\df t + \E\l\|\int_0^\cdot \wt{B}_s(u,\th)\,\df W_s\r\|_T^p\\
&\quad + \E\l\|\int_0^\cdot \int_{E}\wt{C}_{s-}(u,z,\th)\,\wt{N}(\df s,\df z)\r\|_T^p. 
\end{align*}
Using Burkholder-Davis-Gundy's inequality and Lemma \ref{lem:M}, we have that 
\begin{align*}
\E\|X^{\th+u} - X^\th - u^\top Y^\th\|_T^p 
&\lesssim |u|^{2p} + \int_0^T \E|\wt{A}_s(u,\th)|^p\,\df s + \E\l|\int_0^T \wt{B}_s^2(u,\th)\,\df s\r|^{p/2} \\
&\quad +  \E\l[\int_0^T \int_{E}\wt{C}_s^p(u,z,\th)\,\nu(z)\,\df z\df s\r] \\
&\lesssim |u|^{2p} + \int_0^T \E\l[ |\wt{A}_s(u,\th)|^p + |\wt{B}_s(u,\th)|^p\r]\,\df s \\
&\quad +  \E\l[\int_0^T \int_{E}|\wt{C}_s(u,z,\th)|^p\,\nu(z)\,\df z\df s\r] 
\end{align*}

According to assumptions B\ref{asb:lg}, B\ref{asb:smooth}, and Taylor's formula, we have, e.g., 
\begin{align*}
\wt{A}_t(u,\th) &= \n_x a(X^{\th}_t)(X^{\th+u}_t - X^\th_t) + u^\top \dot{a}(X^\th,\th)   + \frac{1}{2}[(X^{\th+u}-X^\th)\n_x + u^\top \n_\th]^2a(X_t^*,\th^*) \\
&\quad - u^\top [\n_x a(X^\th_t,\th)Y^\th_t+ \dot{a}(X^\th,\th)],  
\end{align*}
where $X^*$ is a random variable between $X_t^{\th+u}$ and $X^\th$, $\th^* \in [\th,\th+u]$.  
Since the second derivatives are bounded, and from B\ref{asb:dot}, we have that 
\begin{align*}
|\wt{A}_t(u,\th)|^p &\lesssim|X^{\th+u}_t - X^\th_t - u^\top Y^\th_t|^p + |u|^{2p} + |X^{\th+u}_t-X^\th_t|^{2p} 
+ |u|^p|X^{\th+u}_t-X^\th_t|^p
\end{align*}
Similarly, we also have that 
\begin{align*}
|\wt{B}_t(u,\th)|^p &\lesssim|X^{\th+u}_t - X^\th_t - u^\top Y^\th_t|^p + |u|^{2p} + |X^{\th+u}_t-X^\th_t|^{2p} 
+ |u|^p|X^{\th+u}_t-X^\th_t|^p;\\ 
|\wt{C}_t(u,z,\th)|^p &\lesssim|z|^p\l(|X^{\th+u}_t - X^\th_t - u^\top Y^\th_t|^p + |u|^{2p} + |X^{\th+u}_t-X^\th_t|^{2p} + |u|^p|X^{\th+u}_t-X^\th_t|^p\r).
\end{align*}

Hence, under B\ref{asb:nu-moment}, it follows from Lemma \ref{lem:DX} that 
\[
\E\|X^{\th+u} - X^\th - u^\top Y^\th\|_T^p 
\lesssim |u|^{2p} + \int_0^T \E\|X^{\th+u} - X^\th - u^\top Y^\th\|_t^p \,\df t, 
\]
and Gronwall's inequality yields the consequence. 

For any $p\ge 2$, we write the binomial expansion of $p$ as $p= \sum_{k=1}^m \d_k 2^k$, where $m$ is an integer and $\d_k = 0$ or $1$.  
Note that we have already proved the consequence for $p$ with $m=1$ and $\d_1=0,1$. 
Next, we assume that the consequence also  holds true for some $m$ and any $\d_k\ (k=1,2,\dots,m)$. 
Then, the Cauchy-Schwartz inequality yields that for $q = \sum_{k=2}^{m} 2^{k}\d_{k-1}$, 
\begin{align*}
&\E\|X^{\th+u} - X^\th - u^\top Y^\th \|_T^p \\
&= \E\l[  \|X^{\th+u} - X^\th - u^\top Y^\th \|_T^{2^m\d_m}\prod_{k=1}^{m-1} \|X^{\th+u} - X^\th - u^\top Y^\th \|_T^{\d_k2^k}\r] \\
&\le \sqrt{\E\l[ \l\|X^{\th+u} - X^\th - u^\top Y^\th \r\|_T^{2^{m+1}\d_m}\r] } 
\sqrt{\E\l[  \l\|X^{\th+u} - X^\th - u^\top Y^\th \r\|_T^{\sum_{k=2}^{m} 2^{k}\d_{k-1}}\r] } \\
&\le \sqrt{C_{2^m\d_m}|u|^{2\cdot 2^{m+1} \d_m}} \sqrt{C_q|u|^{2q}} \\
&\lesssim |u|^{2^{m+1}\d_m + q} = |u|^{2p}. 
\end{align*}
This completes the proof. 
\end{proof}

\begin{remark}\label{rem:unknown2}
If the random measure $N$ essentially includes unknown parameters, then the derivative process in the sense of $L^q$ cannot exist. To see this, consider a simple case where $X^\th$ is a Poisson process with (unknown) intensity $\th$: $X^\th\sim Po(\th t)$, which is not the case described in Remark \ref{rem:unknown}. In this case, we cannot compute the expectation $\E\|X^{\th+u} - X^\th \|_T^p$ since we do not know the joint distribution of $(X^{\th+u},X^\th)$. 
 This consideration indicates that we should be careful when we compute expected functionals of $X^\th$ by Monte Carlo simulation when it has an unknown jump part. 
\end{remark}

\begin{example}[L\'evy processes]\label{ex:levy}
Consider a 1-dim L\'evy process $X^\th$ starting at $x>0$: 
\[
X^\th_t = x + \mu t + \s W_t + \eta S_t, 
\]
where $S$ is a known {\it L\'evy process} with $\E[S_1]=1$ and $\eta\ne 0$. 
We set $\th=(\mu,\s,\eta) \in \Theta\subset \R^3$. 
Then, this is the case of \eqref{sde} with 
\[
a(x,\th) = \mu + \eta,\quad b(x,\th) = \s,\quad c(x,z,\th)=\eta z,\quad X_0=x, 
\]
Hence, the derivative process $Y^\th$ is a 3-dim  L\'evy process of the form 
\[
Y^\th_t =\l(t, W_t, S_t\r)
\]
\end{example}

\begin{example}\label{ex:ou}
Consider an  O-U process $X=(X_t)_{t\ge 0}$ written as 
\begin{align}
\df X^\th_t = -\mu X^\th_t\,\df t + \s \,\df W_t + \df Z^\eta_t, \quad X_0 = x\ (\mbox{const.}) \label{ou}
\end{align}
where $\th = (\mu,\s,\eta)$, $W$ is a Wiener process, and $Z^\eta$ is a compound Poisson process with 
known intensity, and  the mean of the jumps  is $\eta$
Then, the SDE \eqref{ou} is rewritten as 
\[
X^\th_t = x +  \int_0^t (-\mu X^\th_s + \eta)\,\df s + \s W_t + \int_0^t \int_E (z+\eta)\wt{N}(\df t,\df z),
\]
where $\wt{N}$ is the compensated Poisson random measure associated with $Z^0\,(\eta =0)$ (see Remark \ref{rem:unknown}). 

Then, the derivative process $Y^\th=(Y^1_t,Y^2_t,Y^3_t)_{t\ge 0}$ satisfies the following SDE: 
\begin{align*} 
Y^1_t &= \int_0^t (-\mu Y_s^1 - X^\th_s)\,\df s; \quad
Y^2_t = - \mu \int_0^t Y^2_s\,\df s + W_t; \\
Y^3_t &=  \int_0^t (1 - \mu Y^3_s)\,\df s + Z_t^0, 
\end{align*}
since $\int_E z\nu_0(z)\,\df z = 0$. 
The equation for $Y^1$ is an ordinary differential equation for almost all $\omega \in \Omega$, and 
the equations for $Y^2$ and $Y^3$ are O-U type SDEs. 
Therefore, we can solve these equations explicitly, as follows: 
\begin{align*}
Y^1_t &= -\int_0^t X^\th_s e^{-\mu(t-s)}\,\df s,\quad 
Y^2_t = \int_0^t e^{-\mu(t-s)}\,\df W_s,\\
Y^3_t &= \frac{1}{\mu} (1-e^{-\mu t}) + \int_0^t e^{-\mu (t-s)}\,\df Z^0_s,
\end{align*}
and 
\begin{align}
X^\th_t = xe^{-\mu t} + \int_0^t e^{-\mu(t-s)}[\s\,\df W_s + \df Z^\eta_s]. \label{eq:ou} 
\end{align}
\end{example}

\subsection{Expected functionals for semimartingales}

For each $\th\in \Theta$, let $\vp_\th:\R\to \R$ and 
\[
H(\th) = \E\l[\vp_\th(X^{\th}_*)\r], 
\]
where $X^\th_*$ is a $\R$-valued random functional of $X^\th$ such that the inequality 
\begin{align}
|X^{\th+u}_* - X^\th_* - u^\top \wt{Y}^\th|\lesssim \|X^{\th+u} - X^\th - u^\top Y^\th\| + |u|^{1+\d}\quad a.s.,  \label{Y-tilde}
\end{align}
holds true for some $\wt{Y}^\th$ and $\d>0$; see Remark \ref{rem:general} for some examples. 
Summing up our results in Sections \ref{sec:mc}, \ref{sec:C-space} and \ref{sec:sde} with Remark \ref{rem:general}, we can immediately obtain the following result. 
\begin{thm}\label{thm:final}
Suppose that the same assumptions as in Theorem \ref{thm:deriv-X} hold. 
Moreover, suppose that there exists an integer $n\ge 1$  such that 
$\vp_{\th_0}^{(n)}(x)$ is Lipschitz continuous: 
\[
|\vp_{\th_0}^{(n)}(x) - \vp_{\th_0}^{(n)}(y)|\lesssim |x-y|, \quad x,y\in \R, 
\]
and that for some constant $r>2$,   
\[
\vp_{\th_0}^{(k)}(X^{\th_0}_*) \in L^r,\quad k=1,\dots, n. 
\]
Furthermore, assume that we have an estimator of $\th_0$ based on some observations depending on a parameter $n$, say $\wh{\th}_n$, such that assumption A\ref{as:Z} holds true. 
Then the asymptotic distribution of $H(\wh{\th}_n)$ is specified: 
\[
\g_{n*}^{-1}(H(\wh{\th}_n) - H(\th_0)) \toD \l(\E\l[\dot{\vp}_{\th_0}(X^{\th_0})\r] + C_{\th_0}\r)^\top Z^*,\quad n\to \infty, 
\]
and the deterministic vector $C_\th$ is given by 
\[
C_\th = \E\l[\vp^{(1)}(X^\th_*)\wt{Y}^\th\r], 
\]
where $\wt{Y}^\th$ is given in \eqref{Y-tilde}. 
\end{thm}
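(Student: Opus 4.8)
The plan is to assemble Theorem \ref{thm:final} purely as a corollary of the machinery already developed, so the work is one of verifying that the hypotheses of the earlier results are met and then chaining them together. The target asymptotic distribution is exactly the conclusion of Theorem \ref{thm:mc-est}, so the strategy is to confirm its three structural inputs: assumptions A\ref{as:h-deriv}, A\ref{as:h-cont}, and the key expansion \eqref{term2} with the stated constant $C_{\th_0}$. Assumption A\ref{as:Z} is given by hypothesis, so the burden reduces to establishing A\ref{as:h-deriv}, A\ref{as:h-cont}, and \eqref{term2}.

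First I would verify that A4($q$) holds with a suitable $q$. By Theorem \ref{thm:deriv-X}, under the assumptions B\ref{asb:lg}--B\ref{asb:moment} inherited from Theorem \ref{thm:deriv-X}, the solution $Y^\th$ to \eqref{Y-sde} satisfies $\E\|X^{\th+u} - X^\th - u^\top Y^\th\|_T^p \le C_p |u|^{2p}$ for every $p\ge 2$, which gives $\|X^{\th+u}-X^\th-u^\top Y^\th\|_{L^q} = o(|u|)$ for any $q\ge 2$; hence A4($q$) holds for all $q\ge 2$. In particular I may choose $q$ as large as needed, so that the H\"older conjugate $r$ with $1/r+1/q=1$ is arbitrarily close to $1$, and the integrability hypothesis $\vp_{\th_0}^{(k)}(X^{\th_0}_*)\in L^r$ for some $r>2$ comfortably supplies the exponents required by the analogue of Theorem \ref{thm:int-V}. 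With A4($q$) in hand, and using the structural inequality \eqref{Y-tilde} for $X^\th_*$ together with the Lipschitz hypothesis on $\vp_{\th_0}^{(n)}$, the argument of Remark \ref{rem:general} (which itself mirrors the proof of Theorem \ref{thm:asian}) shows that condition \eqref{mvt} of Theorem \ref{thm:cond} holds with $G_{\th_0} = \vp^{(1)}_{\th_0}(X^{\th_0}_*)\wt{Y}^{\th_0}$.

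Next I would invoke Theorem \ref{thm:cond} itself: since A\ref{as:Z} is assumed and A4($q$) is verified for $q>1$, the theorem yields exactly the expansion \eqref{term2} with $C_{\th_0} = \E[G_{\th_0}] = \E[\vp^{(1)}_{\th_0}(X^{\th_0}_*)\wt{Y}^{\th_0}]$, which is precisely the claimed form of $C_{\th_0}$. It then remains to supply A\ref{as:h-deriv} and A\ref{as:h-cont} for the functional $h(x,\th)=\vp_\th(x_*)$; these differentiation-under-the-integral and continuity statements follow from the smoothness of $\vp_\th$ in $\th$ together with the uniform moment bound B\ref{asb:moment} and the integrability of the relevant derivatives, via dominated convergence. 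With all hypotheses of Theorem \ref{thm:mc-est} confirmed, its conclusion delivers $\g_{n*}^{-1}(H(\wh{\th}_n)-H(\th_0))\toD (\E[\dot{\vp}_{\th_0}(X^{\th_0})]+C_{\th_0})^\top Z^*$, which is the assertion.

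The main obstacle I anticipate is not any single hard estimate but rather the careful bookkeeping of integrability exponents: one must check that the value of $q$ furnished by Theorem \ref{thm:deriv-X}, the exponent $r$ controlling $\vp_{\th_0}^{(k)}(X^{\th_0}_*)$, and the intermediate H\"older pairings appearing in the Remark \ref{rem:general}/Theorem \ref{thm:asian} argument are mutually compatible for the given integer $n$. Since Theorem \ref{thm:deriv-X} grants A4($q$) for \emph{all} $q\ge 2$, this compatibility is achievable whenever $r>2$, but making the selection of $q$ explicit and confirming that \eqref{Y-tilde} genuinely reduces the $X^\th_*$-increment to the $\|\cdot\|$-increment controlled by A4($q$) is the delicate point; everything else is a direct citation of the preceding theorems.
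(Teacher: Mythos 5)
Your proposal is correct and follows essentially the same route as the paper, which states Theorem \ref{thm:final} as an immediate consequence of combining Theorem \ref{thm:deriv-X} (to secure A4($q$) for all $q\ge 2$), Remark \ref{rem:general}/Theorem \ref{thm:asian} (to verify \eqref{mvt} with $G_{\th_0}=\vp^{(1)}_{\th_0}(X^{\th_0}_*)\wt{Y}^{\th_0}$ via \eqref{Y-tilde}), Theorem \ref{thm:cond} (to obtain \eqref{term2} with $C_{\th_0}=\E[G_{\th_0}]$), and Theorem \ref{thm:mc-est}. Your explicit bookkeeping of the H\"older exponents and your flagging of A\ref{as:h-deriv}--A\ref{as:h-cont} are details the paper leaves implicit, but the argument is the same.
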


\begin{example}[Ornstein-Uhlenbeck type processes]\label{ex:ou2}
This is a continuation of the previous Example \ref{ex:ou}. 
Let us consider the same SDE as \eqref{ou}, and  consider the {\it expected discounted functional} for a constant $\d>0$, 
\[
H(\th) = \E\l[\int_0^T e^{-\d t} V(X_t) \,\df t\Big| X_0=x\r], 
\]
which is an important quantity in insurance and finance because such a functional can represent an option price when $X$ is a stock price (see, e.g., Karatzas and Shereve \cite{ks91}), or it can represent some aggregated “costs” or “risks” in insurance businesses when $X$ is an asset process of the company; see, e.g., Feng and Shimizu \cite{fs13}. The constant $\d>0$ is interpreted as an interest rate. 

Here, we shall consider a simple case where $V(x) = x$: 
\[
H(\th) = \int_0^t e^{-\d t}\E[X_t]\,\df t, 
\]
Noticing that from expression  \eqref{eq:ou}, 
\[
\E[X_t] = xe^{-\mu t} + \E\l[\int_0^t e^{-\mu(t-s)} \,\df Z^\eta_s\r] = xe^{-\mu t} + \frac{\eta}{\mu}(1-e^{-\mu t}), 
\]
we can compute $H(\th)$ explicitly as 
\[
H(\th) = \frac{x}{\mu + \d}(1-e^{-(\mu + \d)T}) 
+ \frac{\eta}{\mu}\l[\frac{1}{\d} (1-e^{-\d T}) - \frac{1}{\mu + \d}(1-e^{-\mu + \d)T})\r]. 
\]

Suppose that $Z$ is a compound Poisson process, and that we have a set of discrete samples 
$(X_{t_1},X_{t_2},\dots,X_{t_n})$ with $t_k = kh_n$ for $h_n>0$, and 
assume some asymptotic conditions on $n$ and $h_n$, e.g., $h_n\to 0$ and $nh_n^2\to 0$. 
Although we omit the details of the regularity conditions here, we can construct an asymptotic normal (efficient) estimator of $\th = (\mu,\s,\eta)$, say $\wh{\th}_n$, such that 
\[
\G_n^{-1} (\wh{\th}_n - \th) \toD N_3(0,\Sigma), \quad n\to \infty
\] 
with $\G_n = {\rm diag} (1/\sqrt{nh_n}, 1/\sqrt{n}, 1/\sqrt{nh_n})$ and a diagonal matrix $\Sigma=\mathrm{diag}(\Sigma_1,\Sigma_2,\Sigma_3)$ (see, e.g., Shimizu and Yoshida \cite{sy06}). 
In this case, we have  $\g_{n*} = 1/\sqrt{nh_n}$, and Theorem \ref{thm:int-V} says that 
\[
\sqrt{nh_n}[H(\wh{\th}_n) - H(\th_0)] \toD N\l( 0, C_{\th_0}^\top \mathrm{diag}(\Sigma_1,0,\Sigma_3) C_{\th_0}\r)
\]
where 
\[
C_\th = \l(\int_0^T e^{-\d t}\E[Y^\th_t]\,\df t\r) =:(C_\th^1,C_\th^2,C_\th^3)^\top; 
\]
with $C_\th^2=0$ and 
\begin{align*}
C_\th^1 &= \frac{\eta - \mu x}{\mu(\d +\mu)^2}\l[1 - (\mu + \d)e^{-(\mu + \d)T - e^{-(\mu+\d)T}}\r] + \frac{\eta}{\d \mu^2}(1-e^{-\d T})  \\ &\qquad  + \frac{\eta}{\mu^2(\mu + \d)}(1-e^{-(\mu + \d)T}); \\
C_\th^3 &= \frac{1}{\mu}\l[\frac{1}{\d}(1-e^{-\d T}) - \frac{1}{\mu + \d}(1-e^{-(\mu + \d)T})\r]. 
\end{align*}
\end{example}

\section{Numerical experiments}
In this section, we shall illustrate the result of Theorem \ref{thm:final} by an example described in Introduction. That is, evaluating the statistical error of Monte Carlo estimation of a European call option based on a diffusion process of the underlying asset. 
In the experiments, we assume that the asset process is observed discretely in time with the {\it small noise asymptotics}, which philosophically corresponds to a kind of long term observations; see, e.g., Shimizu \cite{s17}, Section 2.3. 
We first compute estimators of unknown parameters in the process from discrete samples, and compute the price of the European call option by Monte Carlo simulations based on the estimated process, and evaluate the statistical error. 

In Section \ref{sec:sim-setting}, we will describe a general framework of the simulations as well as how to construct estimators of unknown parameters in the process. In Section \ref{sec:BS}, we will particularly consider the Black-Scholes model, and investigate the asymptotic distribution of the estimated European call price by the Monte Carlo method. 

We used the {\tt yuima} package in {\tt R} for simulating paths and discrete sampling from diffusion processes; see Brouste {\it et al.} \cite{yuima} for details.  

\subsection{European call-type functionals under the small noise diffusions}\label{sec:sim-setting}

We assume that the stochastic process $X^\th=(X^\th_t)_{t \ge 0}$ satisfies the following stochastic differential equation: for $\th=(\mu,\s)$ and $\e>0$,  
\begin{align}
\df X_t^{\th_0,\e} = b(X^{\th_0,\e}_t, \mu_0)\,\df t + \e\cdot a(X^{\th_0,\e}_t,\s_0) \,\df W_t,\quad 
X^{\th_0,\e}_0=x, 
\label{small-diff}
\end{align}
and suppose that we observe $X^{\th_0,\e}$ discretely at time points $t_k=k/n\, (k=0,1,\dots, n)$ in $[0,1]$-interval, and write the samples $X_{t_k}\,(k=0,1,\dots, n)$ and denote by $\D_kX = X_{t_k} - X_{t_{k-1}}$. 
Our purpose is to estimate the following expected functional with parameters $r, K, T>0$:  
\begin{align}
{\cal C}_{T,K}^\th :=\E\l[e^{-r T}\max \{X^\th_T - K, 0\}\r]. \label{E-call}
\end{align}
Under the {\it small noise asymptotics}: 
\[
(\e\sqrt{n})^{-1} = O(1), \quad \e \to 0, \quad n\to \infty, 
\]
we can see that a minimum contrast estimator 
\begin{align}
\wh{\th}_n &= \arg\min_\th M_n(\th), \label{M-estimator}
\end{align}
with the contrast function 
\begin{align*}
M_n(\th) &=\sum_{k=1}^n  \l[\frac{n}{\e^2}\frac{\l(\D_kX - \frac{1}{n} b(X_{t_{k-1}},\mu)\r)^2}{a^2(X_{t_{k-1}},\s)} + \log a^2(X_{t_{k-1}},\s)\r], 
\end{align*}
is asymptotically normal: 
\begin{align}
\l(\e^{-1} (\wh{\mu}_n - \mu_0),\sqrt{n}(\wh{\s}_n - \s_0)\r) \toD N_2\l(0, I^{-1}_{\th_0}\r),\quad n\to \infty, 
\label{asymptotics}
\end{align}
where 
\[
I_\th = \mathrm{diag}\l( \int_0^1 \l(\frac{\n_\mu b(X^{\th,0}_s,\mu)}{a(X^{\th,0}_s,\s)}\r)^2\df s, \ 
\frac{1}{2}\int_0^1 \l(\frac{\n_\s a^2(X^{\th,0}_s,\s)}{a^2(X^{\th,0}_s,\s)}\r)^2\,\df s\r); 
\]
see S{\o}rensen and Uchida \cite{su03}. There is an another type of estimators as in Uchida \cite{u08}. 
See also Long {\it et al.} \cite{letal13} and Shimizu \cite{s17} if $X$ is a jump process. 

We would like to numerically demonstrate the result of Theorem \ref{thm:final} with 
\[
\vp_\th(x) = e^{-rT}\max\{x - K,0\}, 
\]
and discretely observed diffusions as in \eqref{small-diff}, but $\vp_\th$ is not differentiable at $x=K$. 
In order to apply the theorem, we can use the approximation as in Example \ref{ex:asian} by  
\[
\vp^\d(x) = \frac{e^{-rT}}{2} \l(\sqrt{(x-K)^2 + \d^2} + x - K\r).  
\]
Setting $H_\d(\th) = \E[\vp^\d(X^\th_T)]$, we have by Theorem \ref{thm:final} that
\begin{align}
\e^{-1} ({\cal C}^{\wh{\th}_n}_{T,K} - {\cal C}^{\th_0}_{T,K}) \toD N(0,C_{\th_0}^\top I^{-1}_{\th_0} \,C_{\th_0}), \label{normal}
\end{align}
under the asymptotics \eqref{asymptotics} as well as $\d\to 0$, 
where 
\[
C_{\th} = \E\l[\frac{Y^{\th,0}_T}{2}\l\{\mathrm{sgn}(X^{\th,0}_T - K) + 1\r\} \r], 
\]
with $\mathrm{sgn}(z) = \I_{\{z>0\}} - \I_{\{z<0\}}$ and the derivative process $Y^{\th,\e}$.  
We will compute $C_{\th_0}$ by Monte Carlo simulations later. 

In the next section, we shall numerically illustrate \eqref{normal} by a more concrete model. 
\begin{remark}\label{rem:P*}
We claim that our setting above is a bit different from the financial practical problem since we do not care about the expectation in \eqref{E-call}, which should be the one with respect to a {\it risk neutral probability} although we should consider the parameter estimation under the physical probability measure $\P$ in financial problem. However, we shall try it just to numerically confirm our theoretical results. 
\end{remark}

\subsection{The Black-Scholes model with small noise}\label{sec:BS}
For simulations, we shall consider the standard Black-Scholes assumption: 
\begin{align}
X_t^{\th_0,\e} = x_0 + \int_0^t \mu_0 X_s^{\th_0,\e}\,\df s + \e \int_0^t \s_0  X_s^{\th_0,\e}\,\df W_s. \label{BSmod}
\end{align}
We assume that $X^{\th_0,\e}$ is observed at $t_k = k/n\ (k=0,1,2,\dots)$, and set the true parameters 
\begin{align}
\th_0=(\mu_0,\s_0) = (0.2,1.0),\quad x_0=1.0,\quad \e = 1/\sqrt{n} \label{truth}
\end{align}
for generating discrete samples from $X^{\th_0,\e}$. Then the minimum contrast estimator \eqref{M-estimator} is given in explicit form: 
\begin{align}
\wh{\th}_n = (\wh{\mu}_n,\wh{\s}^2_n) = \l(\sum_{k=1}^n \frac{\D_kX}{X_{t_{k-1}}},\ 
\e^{-2}\sum_{k=1}^n \frac{(\D_k X - \frac{\wh{\mu}_n}{n}X_{t_{k-1}})^2}{X^2_{t_{k-1}}}\r) \label{estimators}
\end{align}
and the Fisher information matrix becomes 
\[
I_{\th_0} = \mathrm{diag}\l(\s_0^{-4}, 2\s_0^{-2}\r) = \mathrm{diag}(1,2). 
\]

The derivative process $Y^\th_t = (Y^1_t,Y^2_t)$ is given by 
\begin{align*}
\df Y^{1,\e}_t &= (\mu Y^{1,\e}_t + X_t^{\th,\e})\,\df t + \e\cdot \s Y^{1,\e}_t\,\df W_t, \\
\df Y^{2,\e}_t &= \mu Y^{2,\e}_t\,\df t + \e (\s Y^{2,\e}_t + X_t^{\th,\e})\,\df W_t
\end{align*}
with $Y^{\th,\e}_0 = (0,0)$. A sample path of $(X^{\th_0,\e}, Y^{1,\e}, Y^{2,\e})$ is given in Figure \ref{fig:path}.

\begin{figure}
\begin{center}
\includegraphics[height=8cm, width=10cm]{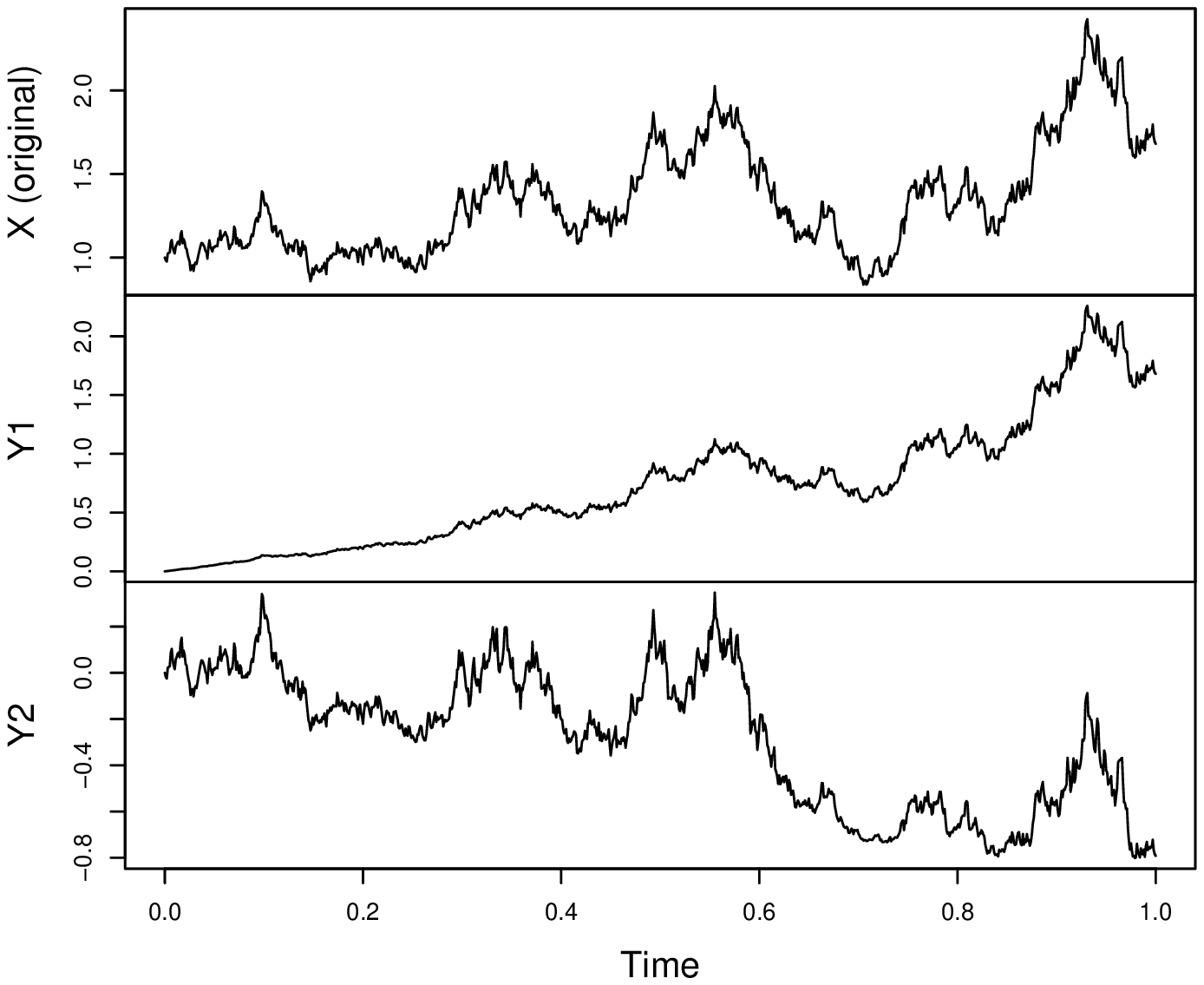}
\caption{Sample path of $(X^{\th_0,\e}, Y^{1,\e}, Y^{2,\e})$ with $(\mu_0, \s_0, x_0)=(0.5,1.0,1.0)$ with $\e =1$.}
\label{fig:path}
\end{center}
\end{figure}

We can find the explicit formula for ${\cal C}_{T,K}^{\th_0}$, \eqref{E-call} 
by using the well-known formula of the European call option price: 
\[
{\cal C}_{T,K}^{\th_0} = e^{-(r-\mu_0)T}\l[x \Phi(d_1) - K e^{-\mu_0 T}\Phi(d_2)\r], 
\]
where $\Phi(x) = \frac{1}{\sqrt{2\pi}}\int_{-\infty}^x e^{-z^2/2}\,\df z$ and 
\[
d_1 = \frac{\log (x/K) + ( r + \e^2\s_0^2/2)T}{\e\s\sqrt{T}};\quad 
d_2 = d_1- \e\s\sqrt{T}. 
\]
One may notice that the above formula is a bit different from the usual Black-Scholes formula. 
As is pointed out in Remark \ref{rem:P*},  we must be careful that the usual formula is under the {\it risk neutral probability}, under which the drift parameter $\mu_0$ corresponds to the interest rate. 
Since we are now ignoring the risk neutral transform, we have to make a minor modification to the formula. 

On the other hand, computing ${\cal C}_{T,K}^{\th_0}$ by Monte Carlo simulations, 
we see from Figure \ref{fig:C0} showing relative errors for Monte Carlo estimators, say ${\cal C}_{T,K}^*$, that 10,000 samples seem to be enough to compute ${\cal C}_{T,K}^{\th_0}$. 
Therefore, we also take 10,000 samples when we compute an estimator ${\cal C}_{T,K}^{\wh{\th}_n}$ by Monte Carlo simulations, below. 

\begin{figure}
\begin{center}
\includegraphics[height=8cm]{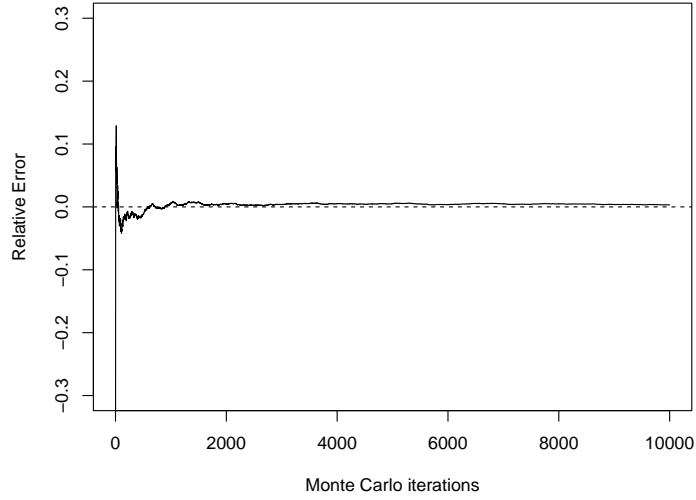}
\caption{The relative error $({\cal C}_{T,K}^* -{\cal C}_{T,K}^{\th_0})/{\cal C}_{T,K}^{\th_0}$, where ${\cal C}_{T,K}^*$ is a Monte Carlo estimator. The horizontal axis is the number of iteration for Monte Carlo simulations. }
\label{fig:C0}
\end{center}
\end{figure}

Numerical experiments are done by the following steps: 
\begin{itemize}
\item[(1)] Generate a path of \eqref{BSmod}, and get discrete samples. 
\item[(2)] Compute $\wh{\th}_n$ in \eqref{estimators}, and generate paths of $X^{\wh{\th}_n,\e}$. 
\item[(3)] Based on paths of (2), compute ${\cal C}_{T,K}^{\wh{\th}_n}$ by a Monte Carlo method: 
\[
{\cal C}_{T,K}^{\wh{\th}_n} \approx \frac{1}{B}\sum_{k=1}^B e^{-r T}\max\{X_T^{\wh{\th}_n,\e}(k) - K,0\}, 
\] 
where $X_T^{\wh{\th}_n,\e}(k)$ is the value of $X^{\wh{\th}_n,\e}_T$ starting from the initial value $X^{\wh{\th}_n,\e}_0=1.0$ for the $k$-th sample path. 
\item[(4)] Compare $N(0,1)$ and the (estimated) distribution of 
\begin{align}
\wh{Z}_n:=\e^{-1} ({\cal C}^{\wh{\th}_n}_{T,K} - {\cal C}^{\th_0}_{T,K})/\sqrt{C_{\th_0}^\top I^{-1}_{\th_0} \,C_{\th_0}}
\label{normed.estimator}
\end{align}
for different sample sizes: $n=50, 100,$ and $300$. 
\end{itemize} 
In our experiments, we put 
\[
T = 1.0,\quad K=0.75, \quad r = 0.05,\quad x=1.0. 
\]
Then,  we had 
\[
C_{\th_0}=\begin{pmatrix}1.64937\\ 0.00585\end{pmatrix}, 
\]
by Monte Carlo simulations, and that the asymptotic variance in \eqref{normed.estimator} is 
\[
C_{\th_0}^\top I^{-1}_{\th_0} \,C_{\th_0} = 1.649396. 
\]
We iterate the steps (1)--(4) 300 times, and show the histograms of $\wh{Z}_n$ and their estimated densities (by the kernel method, which were done by {\tt density()} in {\tt R}) as well as the normal QQ-plots in Figures \ref{fig:n=50} and \ref{fig:n=500}. Then, from their graphs, we can confirm that the asymptotic normality holds true even in the case where sample size $n$ is relatively small. 

\begin{center}
\begin{figure}[htbp]
\begin{tabular}{c}
\begin{minipage}{0.5\hsize}
\includegraphics[width=7cm, height=5.5cm]{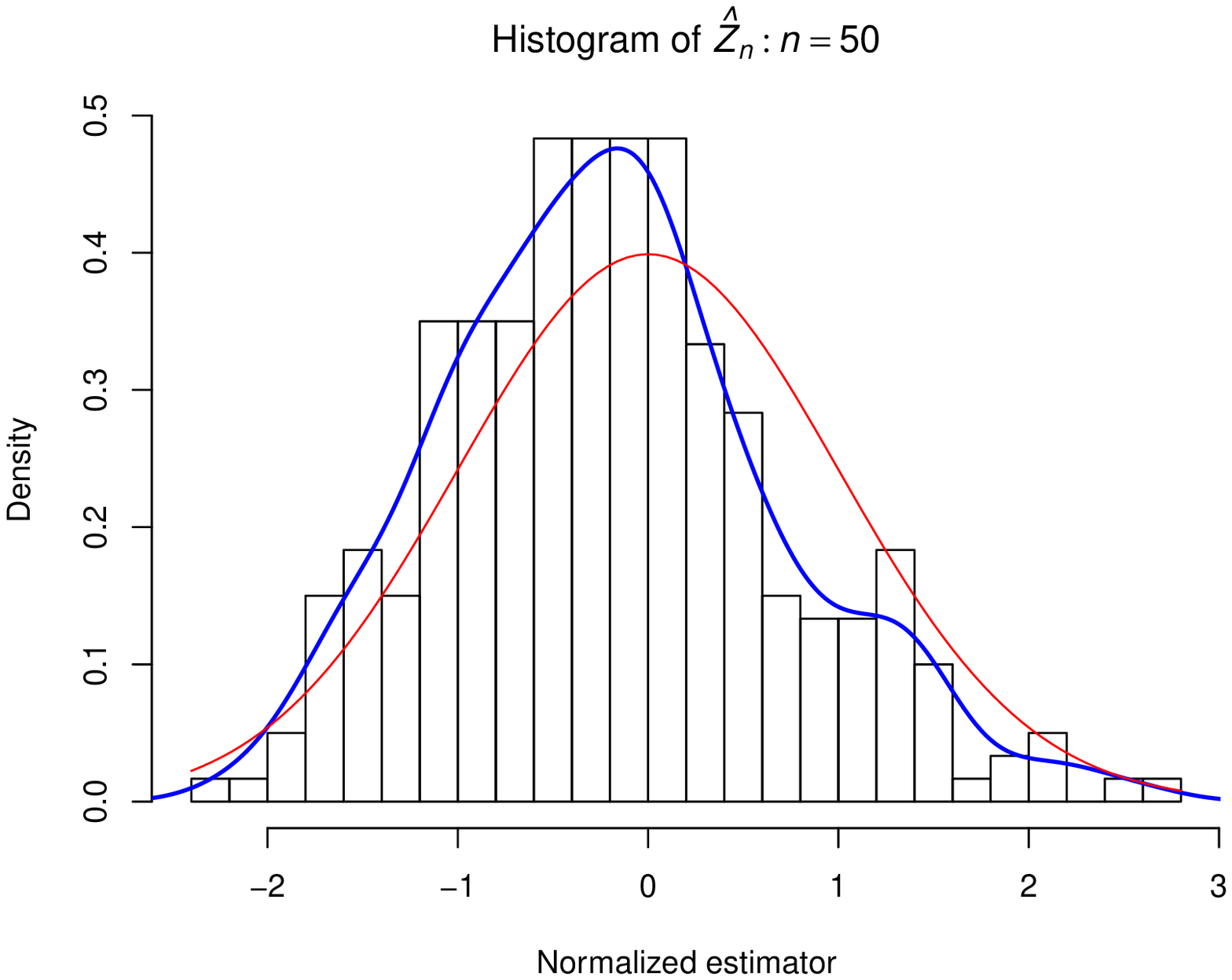}
\end{minipage}
\begin{minipage}{0.5\hsize}
\includegraphics[width=7cm, height=5.5cm]{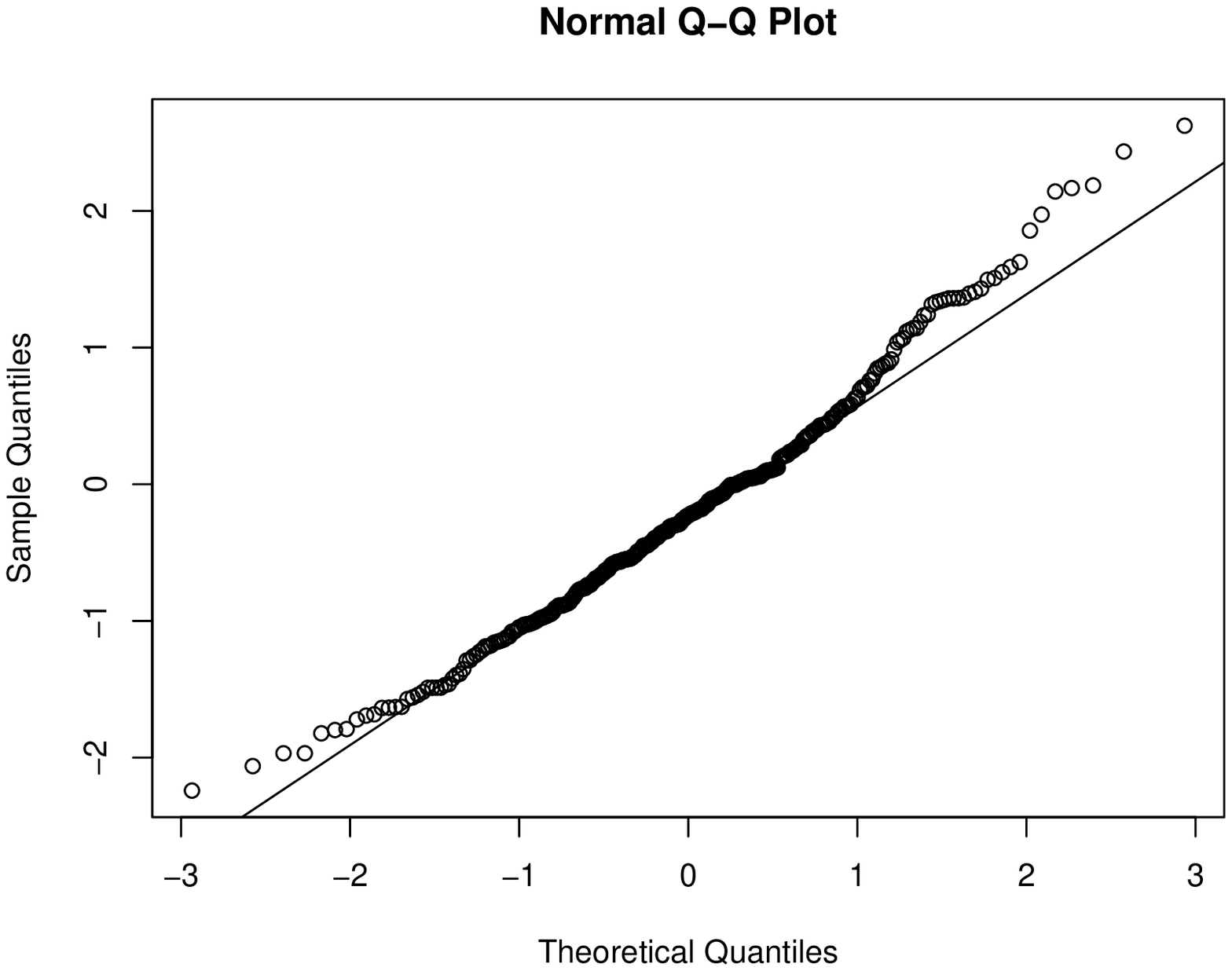}
\end{minipage}
\end{tabular}
\caption{As $n=50$: Histogram of $\wh{Z}_n$ and its estimated density (blue line) as well as the standard normal density (red line) (left); 
Normal QQ-plot (right) .}\label{fig:n=50}
\label{fig:theoretical}
\end{figure}
\end{center}

\begin{center}
\begin{figure}[htbp]
\begin{tabular}{c}
\begin{minipage}{0.5\hsize}
\includegraphics[width=7cm, height=5.5cm]{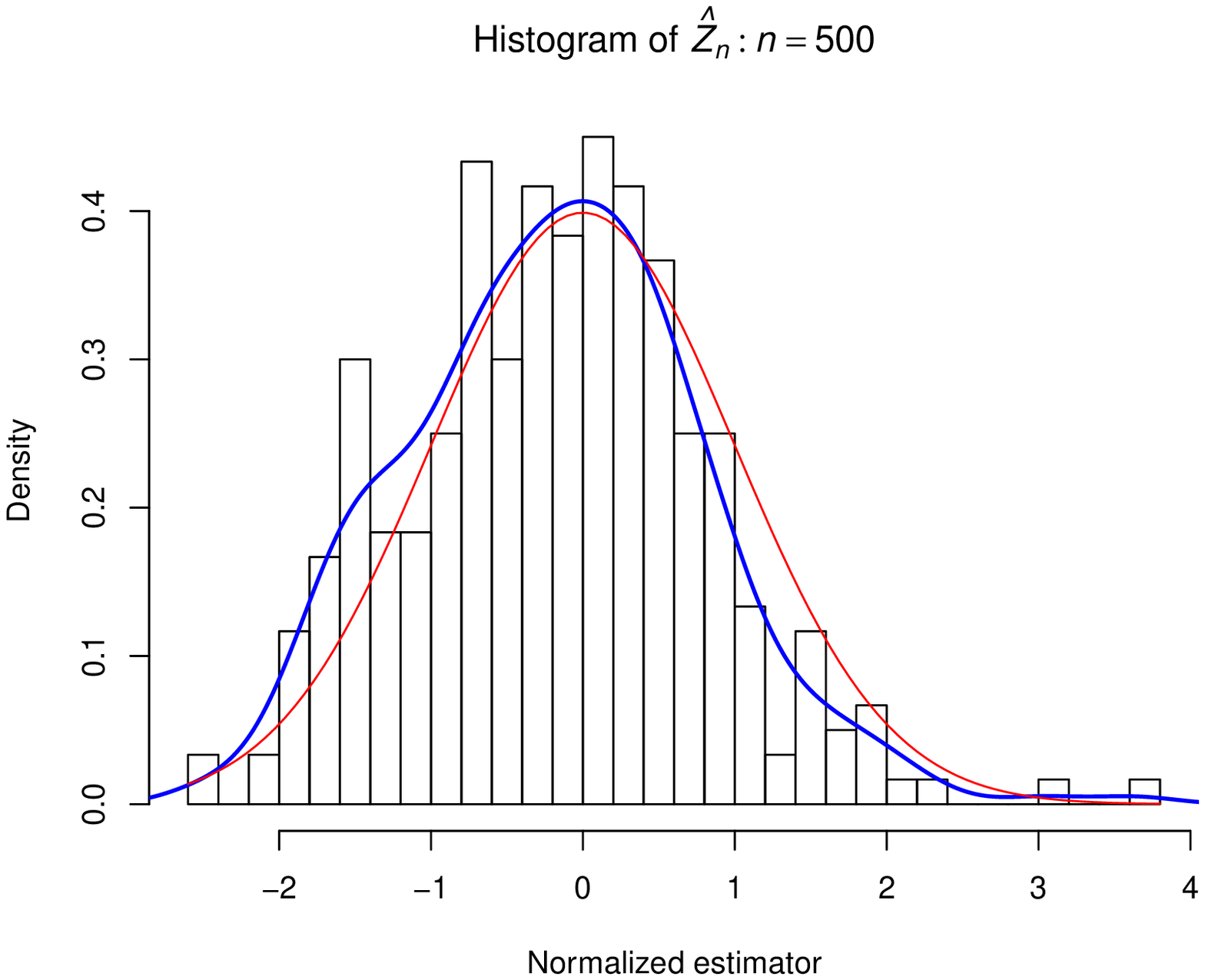}
\end{minipage}
\begin{minipage}{0.5\hsize}
\includegraphics[width=7cm, height=5.5cm]{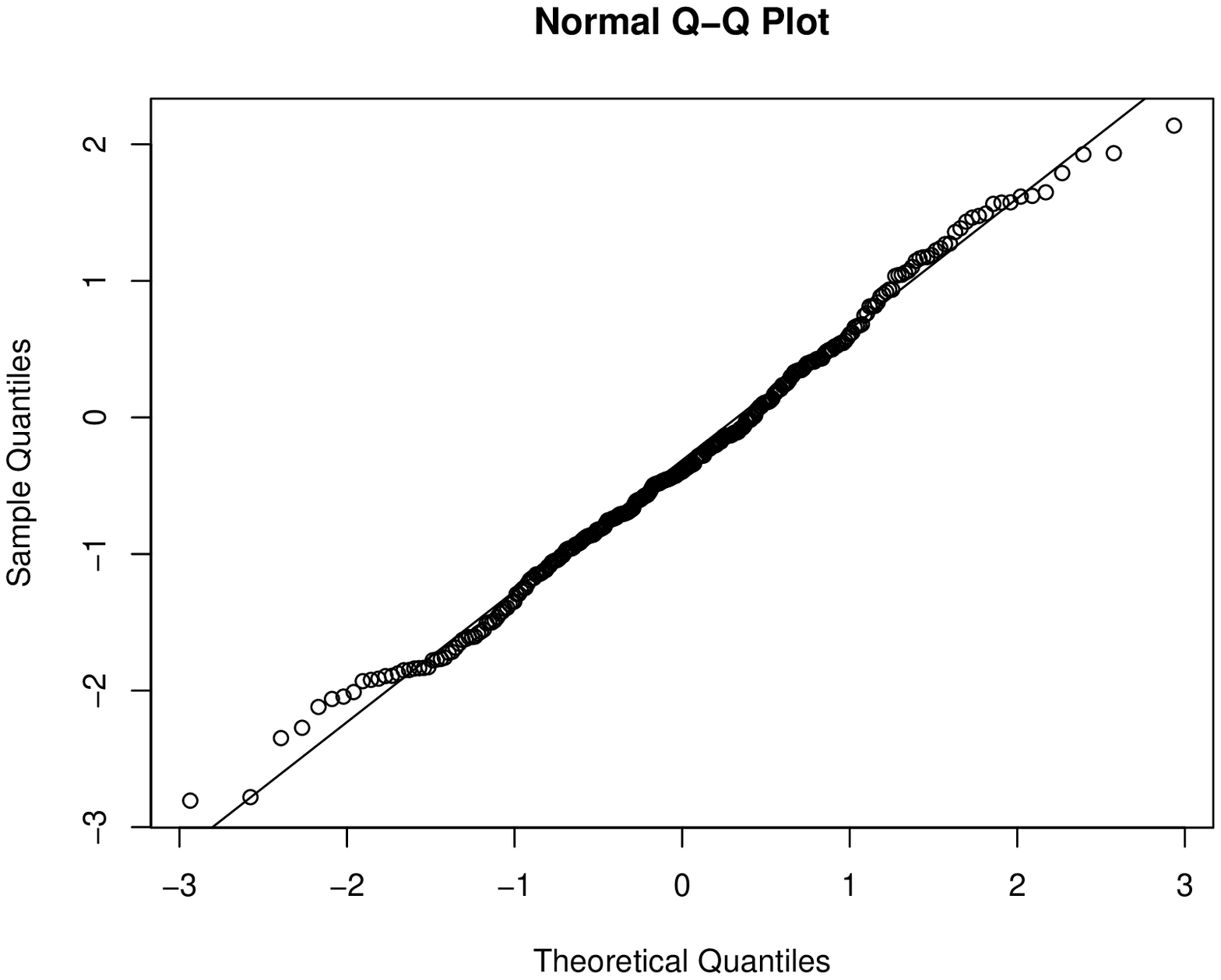}
\end{minipage}
\end{tabular}
\caption{As $n=500$: Histogram of $\wh{Z}_n$ and its estimated density (blue line) as well as the standard normal density (red line) (left); 
Normal QQ-plot (right).}\label{fig:n=500}
\label{fig:theoretical}
\end{figure}
\end{center}

\ \vspace{3mm}\\
\begin{flushleft}
{\bf\large Acknowledgements.}
The author expresses the sincere thanks to anonymous referees for detailed suggestions and proposals that makes the paper improve extensively.  
This research was partially supported by JSPS KAKENHI Grant-in-Aid for Scientific Research (A) \#17H01100; (B) \#18H00836 and JST CREST \#PMJCR14D7, Japan. 
\end{flushleft}


\end{document}